\numberwithin{equation}{section}
\newtheorem{thm}{Theorem}[section]
\newtheorem{corollary}[thm]{Corollary}
\newtheorem{lemma}[thm]{Lemma}
\newtheorem{prop}[thm]{Proposition}
\newtheorem{definition}[thm]{Definition}
\newtheorem{notation}[thm]{Notation}
\newtheorem{rmrk}[thm]{Remark}
\newcommand{\R}{\mathbb{R}}
\newcommand{\ba}{\begin{array}}
\newcommand{\ea}{\end{array}}
\newcommand{\bthm}{\begin{thm}}
\newcommand{\ethm}{\end{thm}}
\newcommand{\bstp}{\begin{stp}}
\newcommand{\estp}{\end{stp}}
\newcommand{\ephi}{\mathcal{E}_{\phi L}}
\newcommand{\ephij}{\mathcal{E}_{\phi_j L_j}}
\newcommand{\blemma}{\begin{lemma}}
\newcommand{\elemma}{\end{lemma}}
\newcommand{\bprop}{\begin{prop}}
\newcommand{\eprop}{\end{prop}}
\newcommand{\bpf}{\begin{pf}}
\newcommand{\epf}{\end{pf}}
\newcommand{\bdefn}{\begin{defn}}
\newcommand{\edefn}{\end{defn}}
\newcommand{\brk}{\begin{rmrk}}
\newcommand{\erk}{\end{rmrk}}
\newcommand{\bcrl}{\begin{crl}}
\newcommand{\ecrl}{\end{crl}}
\newcommand{\beqn}{\begin{equation}}
\newcommand{\eeqn}{\end{equation}}
\renewcommand{\leq}{\leqslant}
\renewcommand{\geq}{\geqslant}
\newcommand{\A}{\mathcal{A}}
\newcommand{\eps}{\varepsilon}
\newcommand{\delE}{\mathit{\Delta} E}
\def\Xint#1{\mathchoice
{\XXint\displaystyle\textstyle{#1}}%
{\XXint\textstyle\scriptstyle{#1}}%
{\XXint\scriptstyle\scriptscriptstyle{#1}}%
{\XXint\scriptscriptstyle\scriptscriptstyle{#1}}%
\!\int}
\def\XXint#1#2#3{{\setbox0=\hbox{$#1{#2#3}{\int}$}
\vcenter{\hbox{$#2#3$}}\kern-.5\wd0}}
\def\dashint{\Xint-}
\newcommand{\unif}{\bar{u}}
\newcommand{\sgn}{\text{sgn}}
\newcommand{\omegaint}{\int_{\Omega}}
\newcommand{\sigmad}{\sigma _d}
\newcommand{\reta}{r_\eta}
\newcommand{\esup}{\mathrm{ess}\,\mathrm{sup}}
\newcommand{\einf}{\mathrm{ess}\,\mathrm{inf}}
\newcommand{\omegphil}{\Omega_{\phi, L}}
\newcommand{\bal}{\begin{align}}
\newcommand{\elg}{\end{align}}
\newcommand{\baru}{\bar{u}}
\newcommand{\E}{\mathcal{E}}
\begin{document}
\title[Energy barrier in the Cahn-Hilliard equation]
{Energy barrier and $\Gamma$-convergence in the $d$-dimensional Cahn-Hilliard equation}
\vspace{1in}
\author{Michael Gelantalis}
\address{Michael Gelantalis, RWTH Aachen University}
\email{gelantalis@math1.rwth-aachen.de}
\author{Maria G. Westdickenberg}
\address{Maria G. Westdickenberg, RWTH Aachen University}
\email{maria@math1.rwth-aachen.de}
\subjclass[2010]{Primary: 49J35, 35B38; Secondary: 49J40}
\begin{abstract}
We study the d-dimensional Cahn-Hilliard equation on the flat torus in a
parameter regime in which the system size is large and the mean value is
close---but not too close---to -1. We are particularly interested
in a
quantitative description of the  energy landscape in the case in which the uniform state is a
local but not global energy minimizer. In this setting, we derive a
sharp leading order estimate of the size of the energy barrier surrounding
the uniform state. A sharp interface version of the proof leads to a  $\Gamma$-limit of the rescaled energy gap between a given function and the uniform state.
\end{abstract}
\date{}
\maketitle
\section{Introduction}
We derive quantitative estimates on the energy barrier surrounding the uniform state in the d-dimensional Cahn-Hilliard equation on the torus in the metastable regime. These estimates are sharp at leading order and light the way to a $\Gamma$-limit for the rescaled energy gap. The study of the energy barrier is motivated by stochastics and the question of nucleation rates. It is well-known that a stochastic perturbation leads to so-called rare events or large deviations, in which the solution of a stochastically perturbed gradient flow ``hops'' from the basin of attraction of one local energy minimizer to that of another. The average timescale for such a rare event is exponentially large and the factor in the exponential is precisely one over noise strength times the energy barrier \cite{FW}. Hence, while we do not study a stochastic equation here, we derive analytical bounds on a deterministic quantity that has meaning for the related stochastic equation.

Because of its importance in nucleation phenomena---for instance in metallurgy, chemistry, and microelectronics---energy barriers and the corresponding ``critical nucleus'' have attracted widespread attention in various application areas ever since the pioneering work of Cahn and Hilliard \cite{CH,CH3}. For recent experimental and numerical studies of nucleation rates, see for instance \cite{LBM,LZZ,PB,ZLZ} and the many references therein.

In contrast, within the mathematical community around the calculus of variations, although the existence of energy barriers is exploited in the rich literature around mountain pass theorems, quantitative studies of energy barriers seem to be rare. Here we analyze an energy barrier and the corresponding $\Gamma$-limit in the context of the Cahn-Hilliard model
\begin{align}
u_t=-\Delta(\Delta u-G'(u))\label{chdyn}
\end{align}
for the mixing of a binary alloy \cite{CH}, where the order parameter $u$ indicates the percentage of material in each phase. From the mathematical point of view, a subtlety of the analysis is that we will consider the competing limits of \emph{large system size} and \emph{mean value close to -1}; see subsections \ref{ss:set} and \ref{ss:lit} below for details about this joint limit. In addition, the $\Gamma$-limit of the rescaled energy barrier represents a (simple) second order $\Gamma$-expansion of the energy; $\Gamma$-expansions have recently been explored by Braides and Truskinovsky \cite{BT}.

Fundamental for our work is the fact that equation~\eqref{chdyn} represents the $\dot{H}^{-1}$ gradient flow with respect to the energy
\begin{align}\label{Energy}
E(u):=\int _{\Omega}\frac{1}{2}|\nabla u|^2+G(u)\,dx.
\end{align}
The first term in the energy models an energetic penalization for spatial variations in $u$, while the second term---the so-called potential term---is a double well potential representing an energetic preference for the two pure phases. For simplicity, we consider the canonical double-well potential
$$G(u)=\frac{1}{4}(1-u^2)^2.$$
An important feature of the dynamic equation~\eqref{chdyn} is that it \emph{preserves the mean} of the order parameter.
Hence, one is interested in the properties of the energy considered for functions with fixed mean.

In the first part of our work, we analyze the energy barrier around the uniform state, that is, the difference between the energy of the minimum energy state on the boundary of the basin of attraction of the uniform state and the energy of the uniform state itself. In the second part of our work, using a sharp interface version of the preceding arguments, we derive the $\Gamma$-limit of the rescaled energy gap between a given function and the uniform state. The limit functional depends linearly on the perimeter and quadratically on the volume of the $+1$ phase in the limit. The limiting functional is predicted by the heuristics; see subsection \ref{ss:lit}.
\subsection{The energy barrier}\label{ss:set}
Consider dimension $d\geq 2$ and let $\Omega\subset \R^d$ be the flat $d$-dimensional torus of volume $L^d$, i.e., $\Omega=[-L/2,L/2]^d$ with periodic boundary conditions. We consider the energy over functions in $H^1\cap L^4(\Omega)$ with fixed mean $-1+\phi$, i.e.,
\begin{align}\label{H-phi}
X_\phi(\Omega):=\Big\{u\in H^1\cap L^4(\Omega): \dashint_\Omega u \, dx=-1+\phi\Big\}.
\end{align}
We are interested in the so-called off-critical parameter regime
\begin{align}\label{subcritical}
L\gg 1\qquad\text{and}\qquad L^{-d/(d+1)}\ll \phi \ll 1,
\end{align}
and the critical regime
\begin{align}\label{critical}
L\gg 1\qquad\text{and}\qquad L^{-d/(d+1)}\sim \phi \ll 1.
\end{align}
It is easy to see that the \emph{uniform state}
\begin{align*}
\bar{u}:= -1+\phi
\end{align*}
satisfies the mean constraint and is a local energy minimizer. In the off-critical regime, it is also easy to see  (cf. subsection \ref{ss:lit}) that $\baru$ is \emph{not the global energy minimizer}. In the critical regime with
\begin{align*}
  \phi=\xi L^{-d/(d+1)}\qquad\text{for fixed }\xi\in (0,\infty),
\end{align*}
the situation is more subtle, but Bellettini, Gelli, Luckhaus and Novaga \cite{BGLN} (for an open set with Lipschitz boundary) and Carlen, Carvalho, Esposito, Lebowitz and Marra \cite{CCELM} (for the torus) establish that there exists a sharp constant at which the global minimizer changes from a spatially uniform state to a nonuniform ``droplet'' state (see subsection \ref{ss:lit} below for more).

In the setting in which there exist states of lower energy than $\baru$ (i.e., in the off-critical regime and critical regime with $\xi$ sufficiently large), we are interested in estimating the size of the associated energy barrier, which we define in the following way.
\begin{definition}
The energy barrier $\mathit{\Delta}E$ surrounding $\baru$ is
\begin{align}\label{ebd}
\mathit{\Delta}E:=\inf_{\gamma \in \mathcal{A}}\max_{t\in [0,1]} \Big(E(\gamma(t))-E(\bar{u})\Big),
\end{align}
where
\begin{align}
\A:=\Big\{\gamma \in C([0,1];X_\phi(\Omega)):\gamma(0)=\bar{u}, E(\gamma(1))<E(\baru)\Big\}.\label{admis}
\end{align}
We use the term energy gap (to the uniform state) of a given function $u$ to refer to the energy difference
$E(u)-E(\baru)$.
\end{definition}
In joint work\footnote{F. Otto, M. G. Reznikoff, unpublished notes, 2004.} with Otto, the argument for which is also included in \cite{Rez}, the second author established the \emph{scaling} of the energy barrier in the off-critical regime, i.e., that there exist positive constants $C_1,\,C_2\in\R$ such that
\begin{align*}
C_1 \,\phi^{-d+1}\leq \delE\leq C_2\, \phi^{-d+1}.
\end{align*}
In theorem \ref{theorem} below, we ``close the gap'' between $C_1$ and $C_2$. Before giving the details of our results in subsection \ref{ss:results}, we explain the heuristics and comment on connections with existing literature.
\subsection{Heuristics and connections with previous results}\label{ss:lit}
To explain the heuristics, we consider the so-called sharp-interface limit. Starting with the energy \eqref{Energy} on a domain of length-scale $L$  and rescaling space by $\eps=L^{-1}$, one obtains
the $\eps$-dependent energy
\begin{align*}
E_\eps(u)=\omegaint \frac{\eps}{2}\,|\nabla u|^2+\frac{G(u)}{\eps}\, dx,
\end{align*}
where $\Omega$ is now order one.
For $\eps\ll 1$, any function $u_\eps$ with bounded energy satisfies $u_\eps\approx \pm 1$ on most of the domain and its energy concentrates on transition regions  between the two phases.  In the sharp interface limit $\eps\downarrow 0$, $u_\eps$ converges almost everywhere to $\pm 1$ and the energy converges in the sense of $\Gamma$-limits.
In particular, according to the seminal result of Modica and Mortola and its extensions (see \cite{MM,M,S}), the energy $E_\eps$ acting on functions with a given fixed (i.e., independent of $\eps$) mean $m\in (-1,1)$ $\Gamma$-converges to
\begin{align}\label{perE}
c_0\text{ Per}\{x\in \Omega: u(x)=+1\}.
\end{align}
Here $\text{Per}(A)$ represents the perimeter of $A$ in $\Omega$ and $c_0$ denotes the constant
\begin{align}\label{c0}
c_0=\int_{-1}^{1}\sqrt{2G(s)}ds=\frac{2\sqrt{2}}{3},
\end{align}
which is the cost of a one-dimensional transition layer:
\begin{align}
c_0 \, := \, \inf \left\{ \int_{-\infty}^{\infty}\frac{\eps}{2}\,u_x^2+\frac{G(u)}{\eps}\, dx \colon \, u(\pm \infty) = \pm 1 \right\}.\notag
\end{align}
In the current paper, we are interested \emph{in the energy barrier} and in considering \emph{simultaneously $L\gg 1$ and mean $-1+\phi$ for $\phi\ll 1$}. To get started, we turn to the sharp-interface limit.
In particular, since the sharp-interface limit measures the leading order contribution to the energy for $\eps\ll1$---or, equivalently, for $L\gg 1$---we can use sharp-interface pictures to understand why $\baru$ is not the global minimizer in the off-critical regime.
If $u=\pm 1$ almost everywhere, then the mean constraint implies
\begin{align*}
V_+-(L^d-V_+)=(-1+\phi)L^d,
\end{align*}
where $V_+$ is the volume of the set where $u=+1$. Solving for $V_+$ gives
\begin{align}
V_+=\frac{\phi L^d}{2}\ll L^d.\label{vplus}
\end{align}
The minimizer of the perimeter functional under this constraint is a ball where $u=+1$ inside of a background where $u=-1$. We observe that the leading order energy of a smooth approximation of such a ``droplet function'' scales like
\begin{align}\notag
\phi ^{(d-1)/d}L^{d-1},
\end{align}
which is much less than
\begin{align}\label{E(unif)}
E(\unif)=\frac{L^d}{4}\left(1-(-1+\phi)^2\right)^2=\phi ^2 L^d-\phi ^3 L^d+\frac{\phi ^4}{4}L^d
\end{align}
in the off-critical regime \eqref{subcritical}.
Hence $\baru$ is indeed not the global minimizer.
(Finding states of lower energy than $\baru$ in the critical regime is more subtle, since then the energy of the droplet state and the energy of the uniform state are of the same order.)

The simple argument above verifies that, in the off-critical regime, there \emph{exist states of much lower energy than the uniform state}. However we have not argued that the $\pm 1$ function considered above approximates the energy minimizer, and indeed, it does not.
An idea developed in \cite{BCK1} and exploited in \cite{BCK1,BCK2} to analyze the two-dimensional Ising model and in
\cite{BGLN,CCELM} to analyze the global minimizer of the Cahn-Hilliard energy is the following: Rather than putting \emph{all of the ``excess mass''} into a droplet of $u=+1$ (incurring a large perimeter cost) and achieving a bulk field of $u=-1$ (incurring zero bulk energy), it may be better to form a ``partial droplet'' of $u=+1$ (reducing the perimeter cost) and allow for a nonzero bulk cost. We sketch the argument from \cite{BCK1}. Suppose that for $\eta\in [0,1]$ one puts a volume fraction of $\eta V_+$ into a ball of $u=+1$ and distributes $(1-\eta)V_+$ outside of this ball, leading to a value $u=-1+\alpha$ in the bulk (where $\alpha$ is determined by $\eta$ and the mean constraint).
The sum of the leading order surface energy and bulk energy is
\begin{align}
  c_0 \sigma_d\left(\frac{\eta V_+}{\sigma_d/d} \right)^{(d-1)/d}+\frac{4(1-\eta)^2 V_+^2}{L^d}.\label{ld}
\end{align}
Here and throughout, $\sigmad$ denotes the surface area of the $(d  -  1)$- unit sphere in $\mathbb{R}^d$. Heuristically, $\eta=0$ corresponds to the uniform state $\baru$ and $\eta=1$ corresponds to a ``full droplet'' of volume $V_+$.

Recall the definition \eqref{vplus} of $V_+$ and approximate $E(\baru)=\phi^2L^d+h.o.t.$
Then one can use \eqref{ld} to approximate the energy gap between an $\eta$-droplet $u_{\eta,drop}$ and the uniform state
in terms of the partial volume $v:=\eta V_+$ as
\begin{align}\label{sll}
E(u_{\eta,drop})-E(\baru)&=\bar{C}_1 v^{(d-1)/d}-\bar{C}_2\phi v+\frac{\bar{C}_3v^2}{L^d}+h.o.t.,\notag\\
&=  \phi^{-d+1}f(\nu)+h.o.t.,
\end{align}
where we have defined  the rescaled volume $\nu:=\phi^{d}v$ and introduced the constants
\begin{align}
\bar{C}_1:=c_0\sigmad^{1/d}d^{(d-1)/d},\qquad \bar{C}_2:=4,\qquad \bar{C}_3:=4,\label{barc1}
\end{align}
and the function
\begin{align}
f(\nu):=\bar{C}_1 \nu^{(d-1)/d}-4\nu+\frac{4\nu^2}{\phi^{d+1}L^d}.\label{f}
\end{align}
For the critical scaling $\phi=\xi L^{-d/(d+1)}$, \eqref{f} defines the function $f_\xi :\mathbb{R}_+\to \mathbb{R}$ as
\begin{align}\label{glambda}
f_\xi(\nu):=\bar{C}_1 \nu^{(d-1)/d}-4\nu+4\xi^{-(d+1)}\nu^2.
\end{align}
One can observe that there is a crossover at the value
\begin{align}\label{xid}
\xi_d:=c_0^{d/(d+1)}\sigmad^{1/(d+1)}\frac{(d+1)}{4^{d/(d+1)}d^{1/(d+1)}}
\end{align}
in the sense that $f_\xi(\nu)\geq 0$ if $\xi<\xi_d$
while $f_\xi$ has a strictly positive global minimizer if $\xi>\xi_d$. See
Figure \ref{figure1}.
\begin{figure}[h]
\centering%
\includegraphics[scale=0.75]{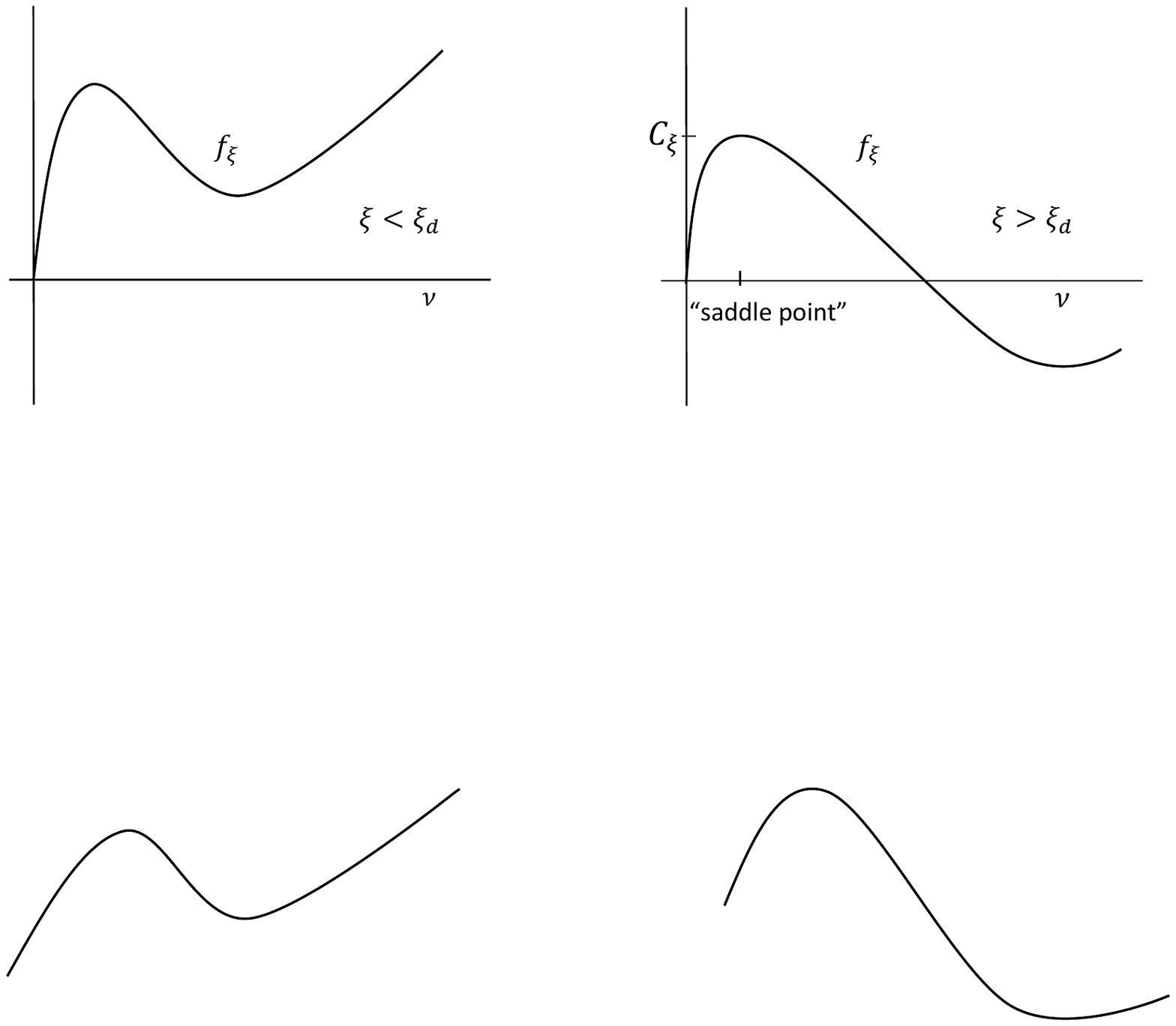}
\caption{The energy gap is ``approximated'' by $f_\xi$. The graph on the left corresponds to the case $\xi<\xi_d$, in which $f_\xi$ has a global minimum at $\nu =0$. The graph on the right corresponds to the case $\xi>\xi_d$, in which the global minimum occurs at a strictly positive value of $\nu$.}
\label{figure1}
\end{figure}

This heuristic analysis suggests that for $\xi<\xi_d$,
$\baru$ is the global energy minimizer, while for $\xi>\xi_d$, there exist states of lower energy. Exactly this fact is established in \cite{BGLN,CCELM}. (In \cite{BGLN}, see \cite [remark 2.5]{BGLN} and \cite [equation (2.9)]{BGLN}, which in our setting reduces to equation \eqref{xid}. In \cite{CCELM}, there is a typo in \cite [equation (1.21)]{CCELM}, but their argument leads indeed to the critical constant $\xi_d$ defined above in \eqref{xid}.)

Here we make additional use of the representation \eqref{glambda} to predict the size of the \emph{energy barrier}.
In the off-critical regime, one can argue that the third term in \eqref{sll} is higher order, so that the energy barrier is well approximated by
\begin{align}\label{f0}
f_\infty(\nu):=\bar{C}_1 \nu^{(d-1)/d}-4\nu.
\end{align}
Clearly $f_\infty$ takes on negative values, and one can check that $f_\infty$ attains its maximum at
\begin{align*}
\nu_m=\left(\frac{\bar{C}_1(d-1)}{4d}\right)^d,
\end{align*}
with maximum value
\begin{align}
  f_\infty(\nu_m)=\frac{\sigmad c_0^d}{d}\left(\frac{d-1}{4}\right)^{d-1}=:C_*.\label{constant}
\end{align}
Based on \eqref{sll} and \eqref{constant}, one may conjecture that the energy barrier in the off-critical regime is (to leading order) $C_*\phi^{-d+1}$. This is the content of \eqref{energybound} from theorem \ref{theorem}.
In the critical regime, all three terms in $f_\xi$ contribute and we cannot be as explicit about the barrier height. However for any $\xi>\xi_d$, let $\nu_\xi$ denote the first strictly positive zero of $f_\xi$ and define
\begin{align}\label{Cxi}
C_\xi:=\max_{\nu \in [0,\nu_\xi]}f_\xi (\nu).
\end{align}
The natural conjecture is that the energy barrier in the critical regime is (to leading order) $C_\xi\phi^{-d+1}$. This is the content of \eqref{energybound2} from theorem \ref{theorem}.

The fractional droplet functions considered in the above heuristics form the basis of the upper bound construction used in \cite{CCELM} to study global minimizers and below (cf. proposition \ref{upperbound}) to study the energy barrier.

Incidentally, in light of the heuristics explained above, one can observe in the off-critical regime the scale separation
\begin{align*}
 \delE\ll \inf_{X_\phi} E \ll E(\baru),
\end{align*}
while in the critical regime, one observes
\begin{align*}
 \delE\sim \inf_{X_\phi} E \sim E(\baru).
\end{align*}
On the one hand, the balance of terms in the critical regime makes certain calculations more delicate. On the other hand, in the off-critical regime, analyzing $\delE$ amounts to resolving a fine-scale feature of the energy landscape.
\subsubsection{Additional literature}
There is a vast literature on the Cahn-Hilliard equation, and in the preceding, we have only attempted to give a brief overview of the papers most closely related to our results and methods. We briefly summarize a few additional results that are related on some level to the present article.

In one space dimension and for large $L$ (or, equivalently, small $\eps$), a fundamental paper on the structure of minimizers is that of Carr, Gurtin, and Slemrod \cite{CGS} and a fundamental paper on the critical nucleus and nucleation is that of Bates and Fife \cite{BFi}. The structure of stable equilibria in higher dimensional problems is analyzed in \cite{GM}. Results on the sharp interface limit of stable equilibria are presented in \cite{SZ}, and the sharp interface limit of general critical points is analyzed in \cite{HT}.

There has been significant activity on the existence of so-called spike, bubble, and multi-spike solutions of the Cahn-Hilliard equation in \cite{WW,WW2,BDS,BFu} and related works; there have also been many recent results on spike solutions in similar models. While these works consider $L\gg 1$ (or $\eps\ll 1$) and fixed mean value, however, our interest is in the competition between small $\phi$ and large $L$. In particular, it is for $\phi\ll 1$ that the energy barrier becomes large and the saddle point acquires a sharp interface structure.

Our work also leads to questions about the structure and properties of the least energy saddle point (see corollary \ref{cor:sad} below). Related works include \cite{NT} and \cite{WW}.
\subsection{Results}\label{ss:results}
We now give the details of our results.
\begin{thm}[Energy barrier]\label{theorem}
Consider the Cahn-Hilliard energy functional \eqref{Energy} on $X_\phi$. In the off-critical regime \eqref{subcritical}, the energy barrier $\delE$ surrounding the uniform state is given by
\begin{align}\label{energybound}
\delE=C_* \phi ^{-d+1}+o(\phi ^{-d+1}),
\end{align}
where $C_*$ is defined in \eqref{constant}.

In the critical scaling
\begin{align*}
\phi=\xi L^{-d/(d+1)}
\end{align*}
for $\xi>\xi_d$  (where $\xi_d$ is defined in \eqref{xid}),
the uniform state $\bar{u}= -1+\phi$ is not the global energy minimizer and
the energy barrier $\delE$ surrounding $\baru$ is given by
\begin{align}\label{energybound2}
\delE=C_\xi \phi^{-d+1}+o(\phi ^{-d+1}),
\end{align}
where $C_\xi$ is defined in \eqref{Cxi}.
\end{thm}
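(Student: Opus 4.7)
The plan is to prove matching upper and lower bounds of the form $C_\sharp\phi^{-d+1}(1+o(1))$, where $C_\sharp$ equals $C_*$ in the off-critical case and $C_\xi$ in the critical case. The backbone of both bounds is the heuristic derived in subsection \ref{ss:lit}: the energy gap associated to concentrating a ``partial mass'' $v$ into a droplet of $+1$ phase behaves like $\phi^{-d+1}f_\xi(\phi^{d}v)$ (with $f_\xi$ replaced by $f_\infty$ in the off-critical regime). Both bounds will proceed by tracking the rescaled droplet volume $\nu=\phi^{d}v$ along paths in $\A$.

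For the upper bound, I would build an explicit one-parameter family $\{u_\nu\}_{\nu\in[0,\nu^*]}\subset X_\phi$ of fractional droplet functions of the type sketched in subsection \ref{ss:lit}: a smooth radially symmetric approximation to the characteristic function of a ball of volume $v=\phi^{-d}\nu$, matched via the optimal one-dimensional transition profile to a spatially constant background whose value is adjusted so that the total mean is $-1+\phi$. A Modica-Mortola computation on the transition layer (which has perimeter cost $c_0\sigma_d^{1/d}d^{(d-1)/d}v^{(d-1)/d}$ to leading order) combined with the straightforward bulk-energy calculation for the background gives
\begin{align*}
E(u_\nu)-E(\baru)=\phi^{-d+1}f_\xi(\nu)+o(\phi^{-d+1})
\end{align*}
uniformly on compact subsets of $\nu$. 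Taking $\nu^*$ slightly past the first positive zero $\nu_\xi$ of $f_\xi$ (respectively past $\nu_m$ in the off-critical case), the family $\nu\mapsto u_\nu$ is a continuous admissible path whose maximum energy is $C_\xi$ (resp.\ $C_*$) times $\phi^{-d+1}$ to leading order; this is the content of the proposition \ref{upperbound} referenced in the text.

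The lower bound is the heart of the matter. The plan is to attach to each $u\in X_\phi$ a ``droplet volume'' $v(u)$, say the measure of a well-chosen superlevel set $\{u>-1+\tau\}$ with threshold $\tau\in(0,2)$ chosen (possibly depending on $\phi$) so that the Modica-Mortola cost crossed in passing from the background to the $+1$ plateau is captured. Coarea plus the optimal-profile inequality give a perimeter term bounded below by $c_0\,\mathrm{Per}\{u>-1+\tau\}$, while on the complement the deviation from $-1+\phi$ controls a bulk $L^2$-term. After applying the isoperimetric inequality on the torus to the perimeter (legitimate once one argues $\nu(u)\ll L^d$ along near-barrier segments) and using the mean constraint to rewrite the bulk deviation in terms of $V_+-v$, one obtains the sharp inequality
\begin{align*}
E(u)-E(\baru)\geq \phi^{-d+1}\bigl(f_\xi(\nu(u))-o(1)\bigr).
\end{align*}
Continuity of $v(\cdot)$ along paths in $C([0,1];X_\phi)$ together with an intermediate value argument -- $v(\gamma(0))=0$ and $v(\gamma(1))$ necessarily lies beyond $\nu_\xi/\phi^d$ since $E(\gamma(1))<E(\baru)$ -- forces the path to sweep through the $f_\xi$-maximizer on $[0,\nu_\xi]$, yielding the matching lower bound.

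The principal obstacle is establishing the uniform inequality $E(u)-E(\baru)\geq \phi^{-d+1}f_\xi(\nu(u))-o(\phi^{-d+1})$ on all of $X_\phi$, not just on droplet-like functions. The delicate points are: (i) selecting a volume proxy $v(u)$ that is simultaneously continuous along continuous paths in $X_\phi$ and compatible with a sharp-constant Modica-Mortola lower bound; (ii) handling configurations in which the $+1$ phase is fragmented or elongated, which demands the torus isoperimetric inequality and a separate argument (via smallness of $\nu$ near the maximizer) to rule out percolation; and (iii) in the critical regime, retaining the quadratic bulk term $4\xi^{-(d+1)}\nu^2$ at leading order rather than absorbing it into the error, so the bulk contribution must be computed sharply against the exact background value rather than merely bounded below by zero. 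Point (iii) is what makes the critical case substantively harder than the off-critical one, and I expect it to consume most of the technical effort.
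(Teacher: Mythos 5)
Your plan follows the same architecture as the paper: upper bound via a continuous path of fractional droplet functions, lower bound via attaching a ``droplet volume'' functional to each $u\in X_\phi$, establishing an inequality of the form $E(u)-E(\baru)\geq\phi^{-d+1}\bigl(f(\nu(u))-o(1)\bigr)$ by splitting the energy into bulk, transition, and droplet contributions (Modica--Mortola plus coarea plus torus isoperimetry), and then invoking continuity and an intermediate-value argument over paths in $\A$. The partition into three regions, the observation that the mean constraint can be ``smuggled in'' by subtracting $G'(-1+\phi)\int(u-\baru)$, and the sharp treatment of the bulk quadratic term in the critical regime are all the right ideas and match the paper.

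However, there is one genuine gap that you yourself flag as ``delicate'' without resolving, and it is precisely the point on which the paper's argument hinges: the volume proxy must be a \emph{continuous} functional on $X_\phi$. Your candidate $v(u)=|\{u>-1+\tau\}|$ is not continuous with respect to $H^1\cap L^4$ (or $L^p$) convergence --- if a limit function has $|\{u=-1+\tau\}|>0$, small perturbations can jump the superlevel-set measure --- so the intermediate-value step on $t\mapsto v(\gamma(t))$ does not go through as stated. The paper's resolution is to replace the hard threshold by a \emph{smooth} partition of unity $\chi_1+\chi_2+\chi_3=1$ subordinate to $\{u\lesssim-1\}$, a transition band, and $\{u\gtrsim+1\}$ (with thickness $\kappa=\phi^{1/3}$), and to define $V(u):=\int_\Omega\chi_3(u)\,dx$. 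This functional is manifestly continuous on $X_\phi$, is compatible with the coarea/isoperimetric lower bound (one integrates the optimal-profile inequality against $\chi_2$ and uses $|\{u>1-2\kappa\}|\geq V(u)$ together with monotonicity of the torus perimeter functional), and simultaneously provides the localization needed for the bulk and droplet estimates. A second, smaller omission: to get the droplet-region lower bound $\int e(u)\chi_3(u)\,dx\gtrsim -\phi V(u)$ one must know $u\leq 1+O(\kappa)$ pointwise; this requires a truncation lemma (cutting $u$ off at $1+\kappa$ while rebalancing mass in the bulk phase so as not to increase the energy or change $V$), which your sketch does not address. Without these two ingredients the lower-bound argument is incomplete.

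Everything else --- the off-critical versus critical dichotomy, the need to retain the quadratic bulk term $4\xi^{-(d+1)}\nu^2$ in the critical regime by computing the bulk contribution sharply via strict convexity of $G$ near $-1$ and Cauchy--Schwarz with the mean constraint, and the upper-bound construction via $u_\eta(x)=v_R(|x|-r_\eta)+\alpha(\eta)$ interpolated from $\baru$ by convex combination --- is in line with what the paper does.
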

The estimate \eqref{energybound} follows directly from the lower bound in proposition~\ref{prop:lowm} and the upper bound in proposition~\ref{upperbound}. The estimate \eqref{energybound2} follows directly from the lower bound in proposition~\ref{prop:lowmcrit} and the upper bound in proposition~\ref{upperbound}.
See also remark~\ref{rem:uplow} below.
\begin{rmrk}
  As remarked in subsection \ref{ss:lit}, the fact that $\baru$ is not the global energy minimizer in the critical regime with $\xi>\xi_d$ has already been established in \cite{BGLN,CCELM}.
\end{rmrk}
\begin{rmrk}
The energy barrier is  ``continuous''  with respect to the transition from the critical to the off-critical regime in the sense that, as $\xi \uparrow \infty$, $C_\xi \downarrow C_*$, where $C_\xi$ and $C_*$ are given by \eqref{Cxi} and \eqref{constant}, respectively.
\end{rmrk}
\begin{rmrk}[Relative size of the barrier] Although the energy barrier is  large, in the off-critical regime it is still \emph{much smaller than} the energy of the uniform state, since
\begin{align}\notag
\phi^{-d+1}\ll\phi ^2 L^d
\end{align}
in this case. In the critical regime, the energy barrier is of the same order as $E(\bar{u})$.
\end{rmrk}
\begin{rmrk}[Upper and lower bounds] \label{rem:uplow} As in \cite{Rez} and \cite{CCELM}, we will obtain \eqref{energybound} and \eqref{energybound2} with the method of upper and lower bounds. In proposition \ref{upperbound}, we construct a path connecting $\baru$ and a state of lower energy such that the maximum energy along the path is less than $C_* \phi^{-d+1}+o(\phi ^{-d+1})$ (in the off-critical regime) or $C_\xi \phi^{-d+1}+o(\phi ^{-d+1})$ (in the critical regime). In propositions \ref{prop:lowm} and \ref{prop:lowmcrit}, on the other hand, we establish that the maximum energy of any such continuous path is at least $C_* \phi^{-d+1}+o(\phi ^{-d+1})$ (in the off-critical regime) or $C_\xi \phi^{-d+1}+o(\phi ^{-d+1})$ (in the critical regime).

Our upper bound directly uses the idea of \cite{BCK1} and the construction of \cite{CCELM}. However whereas their interest was in the energy minimizer, we make the observation that the same construction can be used to build a good estimate of the energy barrier. See subsection~\ref{ss:lit} for an explanation of the idea of the construction and section \ref{s:up} for the construction itself.
\end{rmrk}
\begin{rmrk}[Same barrier for constrained Allen-Cahn]\label{rem:constrac} As remarked above, the energy barrier represents the exponential factor in the exponentially long timescale for the stochastically perturbed equation to leave the basin of attraction of the uniform state. Since the results in theorem~\ref{theorem} concern a static feature of the energy landscape---that is, since the energy barrier depends only on the energy and the mean constraint (and \emph{not on the metric})---they are the same for the Cahn-Hilliard equation and for the constrained Allen-Cahn equation
\begin{align*}
u_t=\Delta u-G'(u)+\frac{1}{L^d}\int_\Omega G'(u)\,dx.
\end{align*}
\end{rmrk}
A corollary of theorem \ref{theorem} is the existence of a saddle point, which follows from a standard mountain pass argument.
\begin{corollary}[Saddle point]\label{cor:sad}
For every pair $(\phi,L)$ related by $L^{-d/(d+1)}\ll \phi \ll 1$ or $\xi L^{-d/(d+1)}= \phi \ll 1$
for $\xi>\xi_d$,  the energy functional $E$ possesses  a nonconstant saddle point $u_s\in X_\phi$, which satisfies
\begin{align*}
-\Delta u_s+G'(u_s)=\lambda\qquad\text{ for some }\lambda\in\R
\end{align*}
and
\begin{align}
E(u_s)=E(\bar{u})+\mathit{\Delta} E.\label{ensad}
\end{align}
\end{corollary}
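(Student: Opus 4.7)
The plan is to apply a constrained mountain pass theorem to the $C^1$ functional $E$ on the affine space $X_\phi\subset H^1\cap L^4(\Omega)$. First I would verify the mountain pass geometry. The uniform state $\baru=-1+\phi$ is a strict local minimizer of $E$ on $X_\phi$: for small $\phi$ one has $G''(\baru)=3(-1+\phi)^2-1>0$, so the second variation
\begin{equation*}
\langle E''(\baru)v,v\rangle=\int_\Omega |\nabla v|^2+G''(\baru)\,v^2\,dx
\end{equation*}
is coercive on mean-zero $H^1$ perturbations, making $\baru$ a strict local minimum. The admissible class $\A$ is nonempty by proposition~\ref{upperbound}, which yields a continuous path from $\baru$ to a state of strictly lower energy. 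Combined with the lower bound $\delE>0$ from Theorem~\ref{theorem}, these ingredients give the desired mountain pass separation.

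Next I would verify the Palais--Smale condition for $E$ on $X_\phi$. Because $E(u)$ controls both $\|\nabla u\|_2^2$ and $\|u^2-1\|_2^2$, any sequence $\{u_n\}\subset X_\phi$ with $E(u_n)$ bounded is bounded in $H^1\cap L^4$ and hence, up to a subsequence, converges weakly in $H^1\cap L^4$ and strongly in $L^q$ for every $q<2d/(d-2)$ to a limit $u\in X_\phi$. Writing $\lambda_n:=\dashint_\Omega G'(u_n)\,dx$ for the Lagrange multiplier attached to the mean constraint, the Palais--Smale hypothesis reads $-\Delta u_n+G'(u_n)-\lambda_n\to 0$ in $H^{-1}(\Omega)$. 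Testing this against $u_n-u$ and exploiting the strong subcritical convergence of $u_n$ together with the $L^4$ bound on $u_n^3$ (plus a Vitali-type argument to handle $G'(u_n)=u_n^3-u_n$) gives $\|\nabla(u_n-u)\|_2\to 0$, hence $u_n\to u$ strongly in $H^1\cap L^4$.

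With mountain pass geometry and Palais--Smale in hand, the standard mountain pass theorem produces a critical point $u_s\in X_\phi$ of $E$ at the level
\begin{equation*}
E(u_s)=\inf_{\gamma\in\A}\,\max_{t\in[0,1]}E(\gamma(t))=E(\baru)+\delE,
\end{equation*}
which is exactly~\eqref{ensad}. Criticality on the affine constraint $X_\phi$ translates into the existence of a Lagrange multiplier $\lambda\in\R$ with $-\Delta u_s+G'(u_s)=\lambda$ (first distributionally, then classically by elliptic regularity on the torus). Finally, $u_s$ cannot be constant, for the mean constraint would then force $u_s\equiv\baru$, giving $E(u_s)=E(\baru)$ and contradicting $\delE>0$.

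I expect the main technical obstacle to be the verification of Palais--Smale in arbitrary dimension $d$, where one must combine the $L^4$ coercivity supplied by the potential with the compactness of the Sobolev embedding to promote the weak subsequential limit of a PS sequence into a strong $H^1\cap L^4$ limit; the mountain pass step and the identification of the Lagrange multiplier are then routine.
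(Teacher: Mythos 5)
Your approach is essentially the same as the paper's: verify Palais--Smale, invoke a mountain pass argument to produce a critical point at the minimax level $E(\baru)+\delE$, identify the Lagrange multiplier, and rule out constancy. The paper works with the shifted functional $\hat{\mathcal{E}}(w)=E(w+\baru)-E(\baru)$ on mean-zero $H^1\cap L^4$ rather than on the affine space $X_\phi$, which is a cosmetic difference. Your PS verification via subcritical compact embedding plus a Vitali-type truncation is the same in spirit as the paper's density argument in $L^4$, and your observation that a constant critical point would force $u_s\equiv\baru$ matches the paper.

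There is, however, one genuine gap. The corollary asserts that $u_s$ is a \emph{saddle point}, which the paper defines as a critical point that is neither a local minimum nor a local maximum. The standard mountain pass theorem (or the deformation-lemma contradiction that the paper uses) only produces a critical point at the level $\delE$; a priori that critical point could be, say, a local minimum of $E$ sitting on the minimax path. Your proof stops at ``the standard mountain pass theorem produces a critical point $u_s$'' and then only rules out constancy, which is not the same as ruling out $u_s$ being a local extremum. The paper closes this gap by observing that the critical set $K_{\delE}$ is compact (a consequence of PS), that $X$ is infinite-dimensional, and that therefore $K_{\delE}$ cannot separate two points of $X\setminus K_{\delE}$; it then cites Pucci and Serrin \cite{PS} to conclude that $K_{\delE}$ must contain a genuine saddle point. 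You would need to include this (or an equivalent) argument to actually obtain a saddle point rather than merely a nonconstant critical point at level $\delE$. Two smaller remarks: your second-variation computation showing $\baru$ is a strict local minimizer is not needed, since $\delE>0$ from Theorem~\ref{theorem} already provides the separation; and the subcritical exponent range $q<2d/(d-2)$ should be stated with the usual caveat that for $d=2$ all finite $q$ are admissible.
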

Although it seems possible that the existence of such a saddle point on the torus has been investigated before, we have not been able to find a reference in the literature. Therefore, for completeness, we include the mountain pass argument in the appendix. We emphasize, however, that our main contribution is not the existence of a saddle point, but rather the quantitative estimate of its energy given by inserting \eqref{E(unif)} and \eqref{energybound} (or \eqref{energybound2}) into \eqref{ensad}.

As a by-product of our study of the energy barrier, we obtain the $\Gamma$-limit of the rescaled energy gap
\begin{align}
\ephi(u)=\frac{E(u)-E(\bar{u})}{\phi^{-d+1}}.\label{rescaled1}
\end{align}
This second order $\Gamma$-limit is interesting because of the competing limits $\phi\downarrow 0$ and $L\uparrow\infty$ (see also remark \ref{rem:mm} below). Also we state the $\Gamma$-limit ``independent of boundary conditions'' in the sense that the $\Gamma$-limit is established with no assumption of periodicity; however see remark \ref{rem:perd} below.
\begin{thm}\label{Gamma}
Let
\begin{align*}
\Omega_{\phi, L}:=\left[-\frac{\phi L}{2},\frac{\phi L}{2}\right]^d,
\end{align*}
and for any $1<p<\infty$ consider the functional on $-1 + L^p(\mathbb{R}^d)$ defined by
\begin{align}\label{F_phi}
\ephi (u):=
\begin{cases}
\displaystyle \int_{\Omega_{\phi, L}}\dfrac{\phi}{2}|\nabla u|^2+\dfrac{1}{\phi}\Big(G(u)-G(-1+\phi)\Big)\,dx,\,\,\\ \qquad \mbox{if}\,\, u\in (H^1\cap L^4)(\Omega_{\phi, L}) \,\,\mbox{and}\,\,u=-1 \, \mbox{in} \,\,\mathbb{R}^d\setminus \Omega_{\phi, L},\\ \qquad \mbox{with}\,\,\dashint_{\Omega_{\phi, L}}  u \,dx=-1+\phi,\\ \\
+\infty,\quad\mbox{otherwise}.
\end{cases}
\end{align}
Let $0<\xi<\infty$ and suppose that for $\phi, L>0$ with $\phi \downarrow 0$ and $L\uparrow \infty$ there holds
\begin{align}
\phi L^{d/(d+1)}\to \xi.\label{lim}
\end{align}
Consider the functional on $-1 + L^p(\mathbb{R}^d)$ defined by
\begin{align}\label{F_0}
\mathcal{E}^\xi_0(u):=
\begin{cases}
c_0\text{Per}(C)-4|C|+4\xi^{-(d+1)}|C|^2,\,\,\\ \qquad \mbox{if}\,\, u=\pm 1 \,\mbox{a.e.}\,\,\mbox{and}\,\,\text{Per}(C)<\infty,\\
\qquad \mbox{where}\,\,C:=\{x: u(x)=+1\},\\ \\
+\infty,\quad\mbox{otherwise}.
\end{cases}
\end{align}
Then $\Gamma$-$\lim\ephi = \mathcal{E}^\xi_0$ in $-1 + L^p(\mathbb{R}^d)$ equipped with the topology generated by the usual $L^p(\mathbb{R}^d)$-distance.
\end{thm}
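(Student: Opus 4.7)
\emph{Approach.} I would follow the classical two-inequality scheme of $\Gamma$-convergence. The key conceptual observation is that the integrand in \eqref{F_phi} equals the Modica--Mortola density $\tfrac{\phi}{2}|\nabla u|^2 + G(u)/\phi$ shifted by the constant $c_\phi := G(-1+\phi)/\phi$, whose integral over $\Omega_{\phi,L}$ satisfies $c_\phi(\phi L)^d = \phi^{d+1}L^d(1-\phi/2)^2 \to \xi^{d+1}$ by \eqref{lim}. The perimeter term $c_0\,\text{Per}(C)$ in the $\Gamma$-limit will come from classical Modica--Mortola, while the bulk contributions $-4|C|$ and $4\xi^{-(d+1)}|C|^2$ will arise from the Taylor expansion of $G$ near $-1$ combined with the mean constraint, which forces the bulk value of $u_\phi$ in the ``$-1$ phase'' to deviate from $-1+\phi$ by an amount depending on $|C|$.

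\emph{Limsup inequality.} Given $u = \pm 1$ a.e.\ with $\text{Per}(C) < \infty$, I would first approximate $C$ by a smooth set $\tilde C$ compactly contained in $\Omega_{\phi,L}$ (possible for small $\phi$ since $\phi L \to \infty$), then build a ``fat droplet'' recovery sequence of the form used in \cite{CCELM}: $u_\phi$ equals $-1+\alpha_\phi$ in $\Omega_{\phi,L}\setminus \tilde C$, equals $+1$ on $\tilde C$, and is glued by the optimal one-dimensional profile $\tanh(\mathrm{dist}_{\partial\tilde C}/(\sqrt 2\,\phi))$ across a tube of width $\mathcal{O}(\phi)$ about $\partial\tilde C$; outside $\Omega_{\phi,L}$ set $u_\phi = -1$. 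The value $\alpha_\phi$ is chosen to enforce the mean constraint, which via \eqref{lim} gives $\alpha_\phi = \phi\bigl(1 - 2|\tilde C|/\xi^{d+1}\bigr) + o(\phi)$. The transition layer contributes $c_0\,\text{Per}(\tilde C) + o(1)$ by the classical optimal-profile calculation; the ``$+1$'' bulk contributes $o(1)$ because $G(1)-G(\bar u) = \mathcal{O}(\phi^2)$; and the ``$-1$'' bulk, via $G(-1+\alpha_\phi) - G(\bar u) = \alpha_\phi^2 - \phi^2 + \mathcal{O}(\phi^3)$, contributes
\[
\frac{(\alpha_\phi^2 - \phi^2)\,|\Omega_{\phi,L}\setminus \tilde C|}{\phi} + o(1) \longrightarrow -4|\tilde C| + 4\xi^{-(d+1)}|\tilde C|^2.
\]
A standard approximation of $C$ by smooth sets $\tilde C$ (with $L^1$-convergence of characteristic functions and convergence of perimeter) closes the construction.

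\emph{Liminf inequality.} Let $u_\phi \to u$ in $L^p(\R^d)$ with $\ephi(u_\phi) \leq M$. Since $\int_{\Omega_{\phi,L}} G(u_\phi)\,dy = \mathcal{O}(\phi)$, we obtain $u = \pm 1$ a.e.\ and $C := \{u = +1\}$ of finite perimeter. I partition $\Omega_{\phi,L} = A_\phi \cup T_\phi \cup B_\phi$ with $A_\phi := \{u_\phi \leq -1+\delta\}$, $B_\phi := \{u_\phi \geq 1-\delta\}$, for a small $\delta > 0$ sent to zero at the end. On the transition set $T_\phi$, the pointwise inequality $\tfrac{\phi}{2}|\nabla u|^2 + G(u)/\phi \geq |\nabla \Phi(u)|$ with $\Phi' = \sqrt{2G}$, applied to the truncation $\Phi_\delta$ of $\Phi$ to $[-1+\delta, 1-\delta]$, together with the coarea formula and lower semicontinuity of perimeter under $L^1$-convergence of sets, yields $\liminf_\phi \int_{T_\phi}[\tfrac{\phi}{2}|\nabla u_\phi|^2 + G(u_\phi)/\phi]\,dy \geq c_0\,\text{Per}(C) - o_\delta(1)$, while $\int_{T_\phi} G(\bar u)/\phi\,dy = o(1)$ because $|T_\phi| = \mathcal{O}(\phi/\delta^2) \to 0$. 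On $A_\phi$, setting $s_\phi := u_\phi + 1 - \phi$ and Taylor-expanding $G(u_\phi) - G(\bar u) = 2\phi s_\phi + s_\phi^2 + \mathcal{O}(|s_\phi|^3 + \phi|s_\phi|^2)$, the mean constraint combined with $\int_{B_\phi}(u_\phi+1)\,dy \to 2|C|$ (from strong $L^p$-convergence on the eventually bounded set $B_\phi$) and $|T_\phi| \to 0$ gives $\int_{A_\phi} s_\phi\,dy \to -2|C|$; Cauchy--Schwarz together with $\phi|A_\phi| \to \xi^{d+1}$ then yields $\int_{A_\phi} s_\phi^2/\phi\,dy \geq (\int_{A_\phi} s_\phi)^2/(\phi|A_\phi|) \to 4|C|^2/\xi^{d+1}$. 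The $B_\phi$-contribution is $o(1)$ by the analogue of the limsup ``$+1$'' bulk calculation. Summing and sending $\delta \to 0$ gives $\liminf \ephi(u_\phi) \geq c_0\,\text{Per}(C) - 4|C| + 4\xi^{-(d+1)}|C|^2 = \mathcal{E}^\xi_0(u)$.

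\emph{Main obstacle.} The principal technical difficulty is the clean separation of the transition and bulk contributions in the liminf: the Young-inequality step used for the perimeter bound consumes $G(u_\phi)/\phi$, so it must be localized to $T_\phi$ via the truncated $\Phi_\delta$ rather than applied globally, and the higher-order Taylor remainders $|s_\phi|^3/\phi$ and $s_\phi^4/\phi$ on $A_\phi$ must be controlled by $\|s_\phi\|_{L^\infty(A_\phi)} \leq \delta + \phi$ times the bounded quantity $\int s_\phi^2/\phi$, both of which vanish as $\delta \to 0$. A secondary subtlety is that $u_\phi + 1$ does \emph{not} converge to $u + 1$ in $L^1(\R^d)$---the bulk carries a diffuse mass defect $\xi^{d+1} - 2|C|$ that vanishes in $L^p$ for $p > 1$ but not in $L^1$---so one must extract the crucial limit $\int_{B_\phi}(u_\phi+1) \to 2|C|$ from the strong $L^p$-convergence on the eventually bounded set $B_\phi$, not from any global mass identity.
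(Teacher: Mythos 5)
Your proposal follows the same overall structure as the paper's proof: a recovery sequence built from the optimal profile along the signed distance of a smooth approximation of $C$, shifted by $\alpha_\phi = \phi\bigl(1-2|C|/\xi^{d+1}\bigr)+o(\phi)$ to match the mean, and a liminf bound obtained by splitting $\Omega_{\phi,L}$ into a near-$(-1)$ bulk, a transition layer, and a near-$(+1)$ region, with coarea plus lower semicontinuity of the perimeter on the transition and Cauchy--Schwarz plus the mean constraint on the bulk. The one organizational difference is that the paper first subtracts $\tfrac{1}{\phi}G'(\bar{u})(u_\phi-\bar{u})$ from the integrand (which integrates to zero by the mean constraint), so that the $-4|C|$ term is captured on the near-$(+1)$ region via the bound $e_\phi \geq -\tfrac{1}{\phi}G'(-1+\phi)(u_\phi+1)\chi_3$; in your version it emerges instead as the linear Taylor contribution $2\int_{A_\phi}s_\phi\to -4|C|$ on the bulk. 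These are equivalent bookkeepings of the same identity, and your treatment of $B_\phi$ (no upper truncation needed since $G\geq 0$ and $|B_\phi|$ stays bounded) is a small simplification relative to the paper's use of lemma \ref{lemma5}.

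There is, however, a gap in your control of the Taylor remainder on $A_\phi$. The set $A_\phi=\{u_\phi\leq -1+\delta\}$ bounds $s_\phi=u_\phi-\bar{u}$ only from \emph{above} by $\delta-\phi$; $u_\phi$ has no pointwise lower bound, so $\|s_\phi\|_{L^\infty(A_\phi)}\leq \delta+\phi$ is false, and your proposed estimate of the cubic and quartic remainders by $\|s_\phi\|_{L^\infty}\cdot\int s_\phi^2/\phi$ does not go through. The paper sidesteps this by never expanding asymptotically: it uses the exact second-order Lagrange form together with the monotonicity of $G''$ on $(-\infty,-1+2\kappa]$, namely
\begin{align*}
G(u)-G(\bar{u})-G'(\bar{u})(u-\bar{u})\geq \tfrac{1}{2}G''(-1+2\kappa)\,(u-\bar{u})^2\geq 0
\end{align*}
for all $u\leq -1+2\kappa$, so the quadratic lower bound holds no matter how negative $u_\phi$ is and no $L^\infty$ control is needed. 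Your $A_\phi$ step should be rewritten in this form (keep the linear term $\tfrac{1}{\phi}G'(\bar{u})s_\phi$ exactly, then lower-bound the remainder by convexity before Cauchy--Schwarz; the quartic structure of $G$ makes this exact). A secondary imprecision: a transition tube of width $O(\phi)$ in the recovery sequence does not suffice. The modified profile must reach $\pm 1$ at rescaled distance $R_\phi$ with $R_\phi\uparrow\infty$ (the paper takes $R_\phi=\phi^{-1/2}$), otherwise the per-unit-area error of order $e^{-R_\phi}$ is $O(1)$ and does not vanish against the fixed factor $\mathrm{Per}(C)$.
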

\begin{rmrk}\label{rem:mm}
Studying the behavior of $\ephi$ as $\phi \downarrow 0$ resembles the asymptotic problem of Modica and Mortola \cite{MM,M,S}; see subsection \ref{ss:lit}. However in our setting
\begin{align*}
  \phi L\sim\phi^{-1/d}\xi^{(d+1)/d},
\end{align*}
so that, rather than working with a fixed domain and fixed mean in $(-1,1)$, we consider
\begin{align*}
  \Omega_{\phi,L}\to\R^d\quad\text{and}\quad \dashint_{\Omega_{\phi,L}} u \, dx\to -1\quad\text{in the limit }\phi\downarrow 0.
\end{align*}
\end{rmrk}
\begin{rmrk}[Periodic boundary conditions]\label{rem:perd} From theorem \ref{Gamma} and its proof we also obtain $\Gamma$-convergence for the problem on the torus, i.e., if $\ephi$ is defined on $u\in (H^1\cap L^4)(\Omega_{\phi, L})$ subject to periodic boundary conditions and the mean constraint. The lower semicontinuity carries over automatically for sequences of periodic functions $u_\phi$ such that
\begin{align*}
  \int_{\Omega_{\phi,L}}|u_\phi-u_0|^p\,dx\to 0,
\end{align*}
and the recovery sequence that we define in step 2 of the proof is already periodic on $\Omega_{\phi, L}$.
\end{rmrk}
\begin{rmrk}
We remark that the topology of $L^1$ convergence overdetermines the problem in the following sense. Suppose that $u_\phi\to u_0$ in $L^1$ with $u_0=\pm 1$ a.e.\ and such that $C:=\{x\colon u_0(x)=1\}$ satisfies $|C|<\infty$. Then
\begin{align}
  \lim_{\phi\downarrow 0}\int_{\Omega_{\phi,L}}(u_\phi+1)\,dx=2|C|.\label{p1}
\end{align}
On the other hand, from the mean constraint, we have
\begin{align}
 \lim_{\phi\downarrow 0} \int_{\Omega_{\phi,L}}(u_\phi+1)\,dx= \lim_{\phi\downarrow 0} \phi|\Omega_{\phi,L}|\overset{\eqref{lim}}=\xi^{d+1}.\label{p2}
\end{align}
The combination of \eqref{p1} and \eqref{p2} implies that the limit defined in \eqref{lim} determines the measure of the limit set $C$ as $|C|=\xi^{d+1}/2$. To allow for different possible volumes of the set $C$, we consider $L^p$ convergence with $p>1$ (which, roughly speaking, allows the bulk value $-1+\alpha$ to converge to $-1$ at a slower rate than with $p=1$). Alternatively, in keeping with the heuristics explained in subsection \ref{ss:lit}, one can consider functions $u_\phi$ such that
\begin{align*}
  u_\phi-\alpha_\phi\chi_{\R^d\setminus C}\to u_0\qquad\text{in }L^1,
\end{align*}
where $\alpha_\phi:=\phi(1-2|C|/\xi^{d+1})$, $u_0=\pm 1$ a.e., and $C:=\{x\colon u_0(x)=1\}$.
\end{rmrk}
\begin{rmrk}[$\Gamma$-convergence in the off-critical regime]\label{rem:gamoff}
Using a slight modification of the proof of theorem \ref{Gamma}, one can establish $\Gamma$-convergence in the off-critical regime in the case that $\phi ^{d+1}L^d \uparrow \infty$ while $\phi ^{d+2}L^d \to 0$.
Let
\begin{align}
\mathcal{E}_0^\infty(u):=
\begin{cases}
c_0\text{Per}(C)-4|C|,\,\,\\ \qquad \mbox{if}\,\, u=\pm 1 \,\mbox{a.e.}\,\,\mbox{and}\,\,\text{Per}(C)<\infty,\\
\qquad \mbox{where}\,\,C:=\{x: u(x)=+1\},\\ \\
+\infty,\quad\mbox{otherwise}.
\end{cases}
\end{align}
Then for  every $2\leq p<\infty$, $\Gamma$-$\lim \ephi = \mathcal{E}_0^\infty$ in $-1 + L^p(\mathbb{R}^d)$ with regard to the $L^p(\mathbb{R}^d)$-topology.
\end{rmrk}

In this paper we derive the $\Gamma$-limit via a sharp interface version of the proof of the energy barrier. It would be interesting to consider things ``the other way around,'' i.e., to derive information about the energy gap $\E_\phi$ for $\phi>0$ from the (simpler) limit problem $\E^\xi_0$ or $\E_0^\infty$ in the critical or off-critical regimes, respectively. In addition, it would be interesting to analyze the structure of the saddle point $u_s$ from corollary \ref{cor:sad}. This is the subject of work in progress.
\subsection{Notation and organization}
\begin{notation}
If $X$ and $Y$ are nonnegative quantities, we write $X \lesssim Y$ to indicate that there exists a positive constant $C$ that depends at most on the dimension $d$ (except as indicated in the proof of theorem \ref{Gamma}), and such that $X \leq C Y$. Writing $X \sim Y$ means that $X \lesssim Y$ and $Y \lesssim X$.

We write $X\ll Y$ as $\phi\to 0$ if $X/Y\to 0$ as $\phi\to 0$. More generally, if $W, X, Y, Z$ are nonnegative quantities, then ``$W\ll X$ implies $Y\ll Z$'' means that for every $C_1<\infty$ there exists $C_2<\infty$ such that $W\leq X/C_2$ implies $Y\leq Z/C_1$.

In addition we use the standard $O(\cdot)$ and $o(\cdot)$ notation.
\end{notation}
\begin{notation}
If $A$ is a measurable subset of $\mathbb{R}^d$, we write $|A|$ to denote its $d$-dimensional Lebesgue measure. We use $\chi_A$  for the characteristic function of $A$.
\end{notation}
In section \ref{s:low} we establish the lower bounds for the off-critical and critical regimes (in propositions \ref{prop:lowm} and \ref{prop:lowmcrit}, respectively). In section \ref{s:up} we prove the upper bounds for the energy barrier (in proposition \ref{upperbound}). We derive the $\Gamma$-limit of the rescaled energy gap in section \ref{Gconv}. Finally, in the appendix we include the mountain pass argument for the existence of the saddle point.
\section{Lower bounds}\label{s:low}
Here we present the lower bounds for the energy barrier $\delE$ in the off-critical and critical regimes.
The first idea, exploited also in the scaling bound with Otto (see \cite{Rez}), is to  ``smuggle in the mean constraint'' by writing the energy gap of $u\in X_\phi$ in the form
\begin{align}
\lefteqn{E(u)-E(\bar{u})}\notag\\
&=E(u)-E(\bar{u})-G'(-1+\phi)\int_\Omega (u-(-1+\phi))\,dx\notag\\
&=\int_\Omega \frac{1}{2}|\nabla u|^2+G(u)-G(-1+\phi)-G'(-1+\phi)(u-(-1+\phi))\,dx\notag\\
&=:\int_\Omega e(u)\,dx.\label{smuggle}
\end{align}
The second idea, used also in \cite{Rez} and \cite{CCELM} in a somewhat different form, is to estimate separately the integral of $e(u)$ over the regions where $u$ is (roughly speaking) close to $+1$, close to $-1$, and strictly in between $\pm 1$.
To implement this idea, we introduce a partition of unity of $\mathbb{R}$ into three nonnegative smooth functions $\chi_1, \chi_2$, and $\chi_3 :\mathbb{R}\to [0,1]$, such that $\chi_1(t)+\chi_2(t)+\chi_3(t)=1$ for all $t\in\mathbb{R}$ and so that for some small $\kappa >0$ (we will fix $\kappa=\phi^{1/3}$ in the proof) we have
\begin{align}
\chi_1(t)&=\label{chi1}
\begin{cases}
1\quad&\mbox{for}\quad t\leq -1+\kappa\\
0\quad&\mbox{for}\quad t\geq -1+2\kappa,
\end{cases}\\
\chi_2(t)&=\label{chi2}
\begin{cases}
1\quad&\mbox{for}\quad -1+2\kappa\leq t\leq 1-2\kappa\\
0\quad&\mbox{for}\quad t\leq -1+\kappa \quad \mbox{and} \quad t\geq 1-\kappa,
\end{cases}\\
\chi_3(t)&=\label{chi3}
\begin{cases}
1\quad&\mbox{for}\quad t\geq 1-\kappa\\
0\quad&\mbox{for}\quad t\leq 1-2\kappa.
\end{cases}
\end{align}
See Figure \ref{figure2}.
\begin{figure}[h]
\centering%
\includegraphics[scale=0.80]{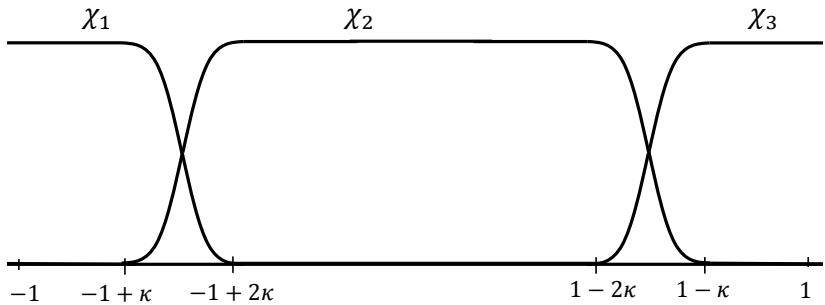}
\caption{The partition of unity $\chi_1+\chi_2+\chi_3=1$.}
\label{figure2}
\end{figure}

We use this partition of unity to decompose the energy gap as
\begin{align}
 \lefteqn{ E(u)-E(\baru)}\notag\\
 &=\int_\Omega e(u)\chi_1(u)\,dx+\int_\Omega e(u)\chi_2(u)\,dx+\int_\Omega e(u)\chi_3(u)\,dx.\label{deco}
\end{align}
Roughly speaking, we think of the support of $\chi_1(u(x))$ as the bulk phase, the support of $\chi_2(u(x))$ as the transition region(s), and the support of $\chi_3(u(x))$ as the ``droplet region'' where $u\approx +1$. However no assumption will be made about the geometry of the droplet region and we will not need to assume closeness to $-1$ in the bulk phase or to $+1$ in the droplet.

As a stand-in for the ``volume of the $+1$ phase,'' we define the continuous functional $V: X_\phi \to \mathbb{R}$ by
\begin{align}\label{V}
V(u):=\int_\Omega\chi_3(u)\,dx.
\end{align}
Our lower bound is given in terms of $V$. For clarity of exposition, we first consider the (simpler) off-critical regime in subsection \ref{ss:lower}. Then in subsection \ref{ss:lowcr}, we derive an improved lower bound for the critical regime.
\subsection{Lower bound in the off-critical regime}\label{ss:lower}
The idea for the lower bound is the following. First in \eqref{m:low}, we establish a lower bound on the \emph{energy gap} as a function of $V$ (defined above). Then we use \eqref{m:low} to establish a lower bound for the \emph{energy barrier}. (Notice that, according to proposition \ref{upperbound},  the energy barrier is well defined in the off-critical regime \eqref{subcritical}, so the lower bound \eqref{m:lowe} on $\delE$ is not vacuous.) The argument is elementary: Since the
function of $V$ on the right-hand side of \eqref{m:low} is zero at zero, is positive for $V$ small, and takes on a value $C_*\phi^{-d+1}+o(\phi^{-d+1})$ before reaching negative values, it is easy to see that any continuous path $\gamma\in \A$ (cf.\ \eqref{admis}) with $V(\gamma(0))=V(\baru)=0$ and $E(\gamma(1))-E(\baru)<0$ must have energy $C_*\phi^{-d+1}+o(\phi^{-d+1})$ for some $t\in (0,1)$.
\begin{prop}[Lower bound, off-critical regime]\label{prop:lowm}
For any $d\geq 2$, there exists $\epsilon_0(d)>0$ with the following property. In the off-critical scaling \eqref{subcritical} with $\phi\leq (\epsilon_0/4)^{3/4}$, any $u\in X_\phi$ with $V(u)\leq\epsilon_0 L^d$ satisfies
  \begin{align}
    E(u)-E(\baru)\geq C_1(\phi)V(u)^{(d-1)/d}-\phi C_2(\phi)V(u),\label{m:low}
\end{align}
where
\begin{align}
  C_1(\phi)&=(1-8\phi^{1/3} )^{1/2} (c_0-8\sqrt{2}\phi^{2/3} )\sigmad^{1/d} d^{(d-1)/d},\label{c1}\\
  C_2(\phi)&=\frac{1}{\phi}(2+\phi^{1/3} )G'(-1+\phi).\label{c2}
\end{align}
As a consequence, the energy barrier defined in \eqref{ebd} satisfies the lower bound
\begin{align}
 \delE&\geq \sup\{C_1(\phi)v^{(d-1)/d}-\phi C_2(\phi)v\colon v\lesssim \phi L^d\}\notag\\
 &= C_*\phi^{-d+1}+o(\phi^{-d+1}),\label{m:lowe}
\end{align}
where $C_*$ is given by~\eqref{constant}.
\end{prop}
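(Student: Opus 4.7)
The strategy is to establish \eqref{m:low} first by estimating the three pieces of the decomposition \eqref{deco} separately, and then extract \eqref{m:lowe} by a continuity/calculus argument along admissible paths. Throughout I would choose $\kappa = \phi^{1/3}$ in the partition of unity \eqref{chi1}--\eqref{chi3}, and denote $I_j := \int_\Omega e(u)\chi_j(u)\,dx$ for $j = 1,2,3$, where $e(u) = \frac{1}{2}|\nabla u|^2 + G(u) - G(\baru) - G'(\baru)(u-\baru)$ as in \eqref{smuggle}. The $c_0$-part of $C_1(\phi)$ comes from a Modica--Mortola/co-area step applied in the transition region, the isoperimetric constant $\sigma_d^{1/d}d^{(d-1)/d}$ comes from the small-set Euclidean isoperimetric inequality on the torus, and the $O(\phi^{1/3})$ corrections track the various truncation/absorption errors.

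On the bulk region, $\chi_1(u)$ is supported in $\{u \leq -1+2\kappa\}$, on which $G$ is strictly convex because $G''(t) = 3t^2 - 1 > 0$ for $|t| > 1/\sqrt{3}$ and $2\kappa \ll 1$. The Taylor remainder $G(u) - G(\baru) - G'(\baru)(u-\baru)$ is therefore pointwise non-negative, so $I_1 \geq 0$. On the droplet region $\{\chi_3(u) > 0\} \subseteq \{u \geq 1-2\kappa\}$, I would use $G(u) \geq 0$ to write $e(u) \geq \tfrac{1}{2}|\nabla u|^2 - G(\baru) - G'(\baru)(u-\baru)$; for $u \in [1-2\kappa,\,1+\phi^{1/3}]$ one has $u - \baru \leq 2 + \phi^{1/3}$ pointwise, while on $\{u > 1+\phi^{1/3}\}$ the quartic growth of $G(u)$ (retained rather than discarded in $e(u)$) dominates the linear term $-G'(\baru)(u-\baru)$ and yields a non-negative contribution. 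Since $G(\baru) = O(\phi^2)$ is lower-order, one obtains $I_3 \geq -G'(\baru)(2+\phi^{1/3})V(u) = -\phi C_2(\phi)V(u)$.

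The transition estimate $I_2$ is the heart of the argument. I would apply the weighted AM--GM splitting
\[
\tfrac{1}{2}|\nabla u|^2 + G(u) \;\geq\; \sqrt{\lambda}\,\sqrt{2G(u)}\,|\nabla u| \;+\; (1-\lambda)\,G(u),\qquad \lambda := 1 - 8\phi^{1/3},
\]
which produces the factor $(1-8\phi^{1/3})^{1/2}$ in front of the Modica--Mortola term and reserves the slack $(1-\lambda)G(u) = 8\phi^{1/3}G(u)$ to absorb the linear error $G'(\baru)(u-\baru)\chi_2(u)$. The latter is of order $\phi$ per unit area, while on $\{\chi_2(u) > 0\} \subseteq \{-1+\kappa \leq u \leq 1-\kappa\}$ one has $G(u) \gtrsim \kappa^2 = \phi^{2/3}$, so $8\phi^{1/3}G(u) \gtrsim \phi$ suffices. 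For the Modica--Mortola piece the co-area formula gives
\[
\int_\Omega \sqrt{2G(u)}\,|\nabla u|\,\chi_2(u)\,dx \;\geq\; \int_{-1+2\kappa}^{1-2\kappa}\sqrt{2G(s)}\,\text{Per}(\{u > s\})\,ds,
\]
and for every $s$ in that range the super-level set $\{u > s\}$ contains $\{u \geq 1-2\kappa\} \supseteq \{\chi_3(u) > 0\}$, so $|\{u > s\}| \geq V(u)$. Provided $V(u) \leq \epsilon_0 L^d$ with $\epsilon_0 = \epsilon_0(d)$ small enough, the small-set Euclidean isoperimetric inequality on the flat torus gives $\text{Per}(\{u > s\}) \geq \sigma_d^{1/d}d^{(d-1)/d}V(u)^{(d-1)/d}$, while $\int_{-1+2\kappa}^{1-2\kappa}\sqrt{2G(s)}\,ds = c_0 - O(\kappa^2)$ accounts for the $(c_0 - 8\sqrt{2}\phi^{2/3})$ factor. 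Summing $I_1 + I_2 + I_3$ yields \eqref{m:low}.

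For the barrier bound \eqref{m:lowe}, I would analyze $g(v) := C_1(\phi)v^{(d-1)/d} - \phi C_2(\phi)v$: it attains its maximum $g(v_m) = C_*\phi^{-d+1}(1+o(1))$ at $v_m = \bigl((d-1)C_1(\phi)/(d\phi C_2(\phi))\bigr)^d \sim \phi^{-d}$, and its second zero $v_* = (C_1(\phi)/(\phi C_2(\phi)))^d$ satisfies $v_m < v_*$. In the off-critical regime $v_* \sim \phi^{-d} \ll \phi L^d \ll \epsilon_0 L^d$. Given $\gamma \in \mathcal{A}$, the map $t \mapsto V(\gamma(t))$ is continuous with $V(\gamma(0)) = 0$; since $E(\gamma(1)) < E(\baru)$ and \eqref{m:low} together force $V$ to exceed $v_*$ at some time, the intermediate value theorem provides $t^* \in (0,1)$ with $V(\gamma(t^*)) = v_m \leq \epsilon_0 L^d$, and \eqref{m:low} applied at $\gamma(t^*)$ delivers the claimed lower bound. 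The main technical hurdle is the transition estimate: the Modica--Mortola/co-area step, the small-set Euclidean isoperimetric inequality on the torus, and the absorption of the Taylor error must be dovetailed through carefully matched choices of $\kappa$ and $\lambda$ to produce exactly the constants in \eqref{c1}--\eqref{c2}.
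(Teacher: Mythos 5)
Your overall strategy matches the paper's: $I_1\geq 0$ by convexity in the bulk, a pointwise lower bound for $I_3$ on the droplet, a Modica--Mortola/coarea argument for $I_2$, and the intermediate-value argument for \eqref{m:lowe}. Your treatment of $I_3$ departs from the paper in a genuinely different and arguably cleaner way: instead of invoking the modification lemma (lemma~\ref{lemma5}) to replace $u$ by $\tilde u\leq 1+\phi^{1/3}$, you argue directly that $e(u)\geq 0$ on $\{u>1+\phi^{1/3}\}$. That does work: at $u=1+\phi^{1/3}$ one has $G(u)\sim\phi^{2/3}$, which dominates $G(\baru)+G'(\baru)(u-\baru)\sim 4\phi$, and since $G'$ is increasing on $[1/\sqrt{3},\infty)$ with $G'(1+\phi^{1/3})\approx 2\phi^{1/3}>2\phi\approx G'(\baru)$, the map $u\mapsto G(u)-G(\baru)-G'(\baru)(u-\baru)$ is increasing there, hence nonnegative. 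One caveat: to get \emph{exactly} $I_3\geq -(2+\phi^{1/3})G'(\baru)V(u)$ you should not simply drop $G(\baru)$ as ``lower order,'' since $G(\baru)\sim\phi^2$ is the same order as $\phi G'(\baru)$; instead use the convexity inequality $G(\baru)\leq\phi G'(\baru)$ (the paper's \eqref{mconv1}) together with $u-\baru=(u+1)-\phi$, which cancels these exactly.

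There is a genuine gap in your $I_2$ step. You pass from $|\{u>s\}|\geq V(u)$ and $V(u)\leq\epsilon_0 L^d$ to $\text{Per}(\{u>s\})\geq\sigma_d^{1/d}d^{(d-1)/d}V(u)^{(d-1)/d}$, but on the torus the chain is $\text{Per}(\{u>s\})\geq P(|\{u>s\}|)\geq P(V(u))=\sigma_d^{1/d}d^{(d-1)/d}V(u)^{(d-1)/d}$, and the middle inequality uses monotonicity of the isoperimetric profile $P$, which holds only on $[0,L^d/2]$; moreover $P(v)=P(L^d-v)$. A lower bound on $|\{u>s\}|$ is therefore useless if $|\{u>s\}|>L^d/2$: then $P(|\{u>s\}|)=P(L^d-|\{u>s\}|)$ and the complement can have volume much smaller than $V(u)$. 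The hypothesis $V(u)\leq\epsilon_0L^d$ does not by itself prevent the transition region $\{-1+\kappa\leq u\leq 1-\kappa\}$ from being a large fraction of $\Omega$. The paper closes this by first reducing to the case $E(u)\leq 2E(\baru)$ (the bound \eqref{m:low} is trivial otherwise, since in the off-critical regime $E(\baru)\sim\phi^2L^d\gg\phi^{-d+1}$), then using $G\geq\kappa^2/2$ on the transition region to deduce $|\{-1+\kappa\leq u\leq 1-\kappa\}|\leq 2\kappa^{-2}E(u)\lesssim\phi^{4/3}L^d$, hence $|\{u>-1+2\kappa\}|\leq(\epsilon_0+4\phi^{4/3})L^d\leq\epsilon L^d$ for $\epsilon_0=\epsilon/2$ and $\phi$ small. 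Without a step of this kind your perimeter bound is not justified.
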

\begin{rmrk}
  Notice that
  \begin{align*}
    \lim_{\phi\downarrow 0}C_1(\phi)=\bar{C}_1,\qquad  \lim_{\phi\downarrow 0}C_2(\phi)=\bar{C}_2,
  \end{align*}
  for $\bar{C}_1,\,\bar{C}_2$ defined in \eqref{barc1}.
\end{rmrk}
\begin{proof}
We obtain \eqref{m:lowe} directly from \eqref{m:low}. Indeed, consider any continuous path $\gamma\in\A$ (where $\A$ is defined in \eqref{admis}). Notice that $V$ is continuous on $X_\phi$ and that $V(\baru)=0$. Then \eqref{m:low} together with the properties of the function $v\mapsto C_1(\phi)v^{(d-1)/d}-\phi C_2(\phi)v$ and the calculation
\begin{align}
  \sup&\{C_1(\phi)v^{(d-1)/d}-\phi C_2(\phi)v\colon v\lesssim \phi L^d\}\notag\\
  &=
\frac{\sigmad (c_0-8\sqrt{2}\phi^{2/3})^d(1-8\phi^{1/3})^{d/2}}{d}\left(\frac{d-1}{4+2\phi^{1/3}}\right)^{d-1}\phi^{-d+1}\notag\\
&=C_*\phi^{-d+1}+o(\phi^{-d+1})\label{c_*}
\end{align}
imply
\begin{align*}
  \max_{t\in[0,1]}\left( E(\gamma(t))-E(\baru)\right)\geq C_*\phi^{-d+1}+o(\phi^{-d+1}).
\end{align*}
Hence, it suffices to establish \eqref{m:low}.
We remark that we may without loss of generality assume that
\begin{align}
E(u)\leq 2E(\baru),\label{enbu}
\end{align}
since otherwise \eqref{m:low} holds trivially.

\underline{Step 1}. We decompose the energy gap as in \eqref{deco} with $\kappa=\phi^{1/3}$. (This choice of $\kappa$ is motivated by \eqref{doc}, below.) Our first step is to show that the contribution on the ``bulk'' is positive. By convexity of $G$ on $(-\infty,-1+2\kappa)$, we have for $u$ within the support of $\chi_1$ that
\begin{align*}
  G(u)\geq G(-1+\phi)+G'(-1+\phi)(u-(-1+\phi)),
\end{align*}
so that indeed
\begin{align}
  \int_\Omega e(u)\chi_1(u)\,dx\geq 0.\label{st1}
\end{align}

\underline{Step 2}. We now estimate the (negative) contribution from the ``droplet.'' We will often make use of the fact that convexity near $-1$ gives
\begin{align}
0=G(-1)&\geq G(-1+\phi)+G'(-1+\phi)(-1-(-1+\phi))\notag\\
&=G(-1+\phi)-\phi G'(-1+\phi).\label{mconv1}
\end{align}
Using \eqref{mconv1}, we observe that
\begin{align*}
  \int_\Omega e(u)\chi_3(u)\,dx&\geq \int_\Omega \Big(-G(-1+\phi)-G'(-1+\phi)(u-(-1+\phi))\Big)\chi_3(u)\,dx\\
  &\geq \int_\Omega -G'(-1+\phi)(u+1)\chi_3(u)\,dx.
\end{align*}
Hence, if we can establish
\begin{align}
u\leq 1+\phi^{1/3},\label{usmall}
\end{align}
we will obtain
\begin{align}
  \int_\Omega e(u)\chi_3(u)\,dx\geq \int_\Omega -G'(-1+\phi)(2+\phi^{1/3})\chi_3(u)\,dx,\label{st2}
\end{align}
which is the desired lower bound on the support of $\chi_3$. The estimate \eqref{usmall} is justified by the following lemma (proved at the end of the subsection), which says that if \eqref{usmall} does not hold, we can replace $u$ by a function $\tilde{u}$ in a way that is compatible with our estimates and so that $\tilde{u}$ satisfies \eqref{usmall}. Hence a bound for functions less than or equal to $1+\kappa$ suffices.
\blemma\label{lemma5}
There exists $\kappa_0>0$ with the following property. For all $0<\phi \ll \kappa \leq \kappa _0$ and  $u\in X_\phi$, there exists a function $\tilde{u} \in X_\phi$ such that
\begin{itemize}
\item   [(i)] $\,\,V(\tilde{u})=V(u)$,
\item   [(ii)] $\,\,\tilde{u}=u$ on $\{x:-1+\phi\leq u(x)\leq 1+\kappa\}$,
\item   [(iii)] $\,\,\esup_\Omega \tilde{u}\leq 1+\kappa$ in $\Omega$,
\item   [(iv)] $\,\,E(\tilde{u})\leq E(u)$.
\end{itemize}
\elemma
\underline{Step 3}. Finally, we need to estimate the contribution to the energy gap over the ``transition region.'' Recalling \eqref{mconv1} and $G'(-1+\phi)>0$ and observing that $u+1\leq 2$ on the support of $\chi_2$, we obtain
\begin{align}
  \int_\Omega e(u)\chi_2(u)\,dx \geq \int_\Omega \Big(\frac{1}{2}|\nabla u|^2 +G(u)-2G'(-1+\phi)\Big)\chi_2(u)\,dx.\label{r1}
\end{align}
To begin, we would like to absorb the negative term. For this, we notice that
\begin{align}
&G'(-1+\phi)\leq 2\phi\notag\\
&G(u)\geq\frac{1}{2}\phi^{2/3}\;\;\text{on the support of }\chi_2,\label{doc}
\end{align}
from which it follows that
\begin{align*}
  8\phi^{1/3}G(u)-2G'(-1+\phi)\geq 0 \;\;\text{on the support of }\chi_2.
\end{align*}
Hence, letting $\tilde{G}(u)=(1-8\phi^{1/3})G(u),$ and invoking the  inequality $a^2+b^2\geq 2|a||b|$ and the coarea formula, we pass from \eqref{r1} to
\begin{align}
   \int_\Omega e(u)\chi_2(u)\,dx &\geq \int_\Omega \Big(\frac{1}{2}|\nabla u|^2 +\tilde{G}(u)\Big)\chi_2(u)\,dx\notag\\
   &\geq \int_\Omega \sqrt{2\tilde{G}(u)}\,\chi_2 (u) |\nabla u|\,dx\notag\\
   &= \int_{-1+\kappa}^{1-\kappa} \sqrt{2\tilde{G}(s)}\,\chi_2(s) \text{Per}_\Omega (\{u>s\})\,ds\notag\\
   &\geq \int_{-1+2\kappa}^{1-2\kappa}\sqrt{2\tilde{G}(s)}\,\text{Per}_\Omega (\{u>s\})\,ds\notag\\
   &\geq \int_{-1+2\kappa}^{1-2\kappa}\sqrt{2\tilde{G}(s)}P(|\{u>s\}|)\,ds.
   \label{per1}
\end{align}
Here we have used the notation $\text{Per}_\Omega$ for the perimeter in the torus, which we have bounded below by the so-called perimeter functional $P(v)$, i.e., the minimal perimeter in the torus of a subset with volume $v$. As in \cite{CCELM}, we will need two facts about the perimeter functional on the torus: First, $P$ is mononotically increasing for $0\leq v\leq L^d/2$. Second, according to the isoperimetric inequality on the torus \cite{MoJo},
there exists $\epsilon=\epsilon (d)>0$ such that
\begin{align}\label{iso}
P(v)=\sigmad ^{1/d} d^{(d-1)/d} v^{(d-1)/d}\quad \mbox{for}\quad v\leq \epsilon L^d.
\end{align}
(This gives the only restriction on $\epsilon_0$ in the statement of our proposition.)

To apply these facts, we need to check that $|\{u>1-2\kappa\}|\leq\epsilon L^d$ and $|\{u>s\}|\leq L^d/2$ for all $s\in [-1+2\kappa,1-2\kappa]$. We will show the stronger statement
\begin{align*}
  |\{u>-1+2\kappa\}|\leq \epsilon L^d.
\end{align*}
Indeed, using the assumed bound on $V$ and $G(s)\geq \kappa^2/2$ on $[-1+\kappa,1-\kappa]$, we observe that
\begin{align}
  |\{u>-1+2\kappa\}|&\leq V(u)+\int_\Omega \chi_{\{-1+\kappa\leq u\leq 1-\kappa\}}\,dx\notag\\
  &\leq \epsilon_0 L^d+\frac{2}{\kappa^2}\int_\Omega G(u)\,dx\notag\\
  &\leq \epsilon_0 L^d+\frac{2}{\kappa^2}E(u)\overset{\eqref{enbu}}\leq \epsilon_0 L^d +4\phi^{4/3}L^d\leq \epsilon L^d,\notag
\end{align}
for $\epsilon_0= \epsilon/2$ and $\phi\leq (\epsilon_0/4)^{3/4}$.

Hence we may use the monotonicity of $P$ and the isoperimetric inequality on the torus to deduce from  \eqref{per1} that
\begin{eqnarray}
  \lefteqn{\int_\Omega e(u)\chi_2(u)\,dx}\notag\\
   &\geq& P(|\{u>1-2\kappa\}|)\int_{-1+2\kappa}^{1-2\kappa}\sqrt{2\tilde{G}(s)}\,ds\notag\\
  &\geq &P(|\{u>1-2\kappa\}|)(1-8\phi^{1/3})^{1/2}(c_0-8\sqrt{2}\phi^{2/3})\notag\\
  &\overset{\eqref{iso}}= &(1-8\phi^{1/3})^{1/2}(c_0-8\sqrt{2}\phi^{2/3}) \sigmad ^{1/d} d^{(d-1)/d} |\{u> 1-2\kappa\}|^{(d-1)/d}\notag\\
 &\geq& (1-8\phi^{1/3})^{1/2}(c_0-8\sqrt{2}\phi^{2/3}) \sigmad ^{1/d} d^{(d-1)/d} V(u)^{(d-1)/d}.\label{stp3}
\end{eqnarray}

Combining \eqref{st1}, \eqref{st2}, and \eqref{stp3} establishes \eqref{m:low}.
\end{proof}
We conclude this subsection with the proof of lemma \ref{lemma5}.
\begin{proof}[Proof of lemma \ref{lemma5}.]
If $u$ already satisfies (iii), then there is nothing to prove. Hence we may assume $|D|>0$ where $D:=\{x: u(x)>1+{\kappa}\}$. Define
\begin{align*}
\Omega_-&:=\{x:u(x)\leq -1+\phi\},
\end{align*}
and, for $\lambda \in [0,1]$, define the function
\begin{align*}
\tilde{u}_\lambda(x):=
\begin{cases}
\min\{u(x),1+\kappa \}\quad&\mbox{for}\quad x\in \Omega\setminus\Omega_-\\
(1-\lambda)u(x)+\lambda (-1+\phi)\quad&\mbox{for}\quad x\in \Omega_-.
\end{cases}
\end{align*}
It is easy to see that $\omegaint \tilde{u}_0\,dx<(-1+\phi)L^d$ and $\omegaint \tilde{u}_1\,dx>(-1+\phi)L^d$. Therefore by the continuity of $\lambda \mapsto \omegaint \tilde{u}_\lambda dx$, there exists $\lambda_u\in(0,1)$ such that $\omegaint \tilde{u}_{\lambda_u}dx=(-1+\phi)L^d$. The function $\tilde{u}:=\tilde{u}_{\lambda_u}$ belongs to $X_\phi$ and satisfies properties (i)-(iii).
It remains to check whether (iv) holds. We first observe that, because of (ii), we have
\begin{align*}
E(\tilde{u})-E(u)&=\int_{D\cup \Omega_-}\frac{1}{2}|\nabla \tilde{u}|^2-\frac{1}{2}|\nabla u|^2+G(\tilde{u})-G(u)\,dx\\
&\leq \int_{D\cup \Omega_-}G(\tilde{u})-G(u)\,dx,
\end{align*}
where the second inequality follows since the gradient term of the energy of $\tilde{u}$ on $D\cup \Omega_-$ is smaller than the corresponding term of the energy of $u$. We thus have
\begin{align}\label{energy ineq}
E(\tilde{u})-E(u)\leq \int_{D} G(1+\kappa )-G(u)\,dx+\int_{\Omega_-}G(\tilde{u})-G(u)\,dx.
\end{align}
The convexity of $G$ on $[1+\kappa, \infty)$ implies that
$$G(1+\kappa )-G(u)\leq -G'(1+\kappa)\big(u-(1+\kappa)\big)$$
on $D$. On the other hand, since $u
\leq \tilde{u}\leq -1+\phi$ on $\Omega_-$, the convexity of $G$ on $(-\infty,-1+\phi)$ implies
\begin{align*}
G(\tilde{u})-G(u)\leq G'(\tilde{u})(\tilde{u}-u)\leq G'(-1+\phi)(\tilde{u}-u).
\end{align*}
Inserting these two inequalities into \eqref{energy ineq} yields
\begin{align}
\lefteqn{E(\tilde{u})-E(u)}\notag\\
&\leq G'(1+\kappa)\int _{D}1+\kappa-u\,dx+G'(-1+\phi)\int _{\Omega_-}\tilde{u}-u\,dx\notag\\
&= G'(1+\kappa )\int _{D}\tilde{u} -u\,dx+G'(-1+\phi)\int _{\Omega_-}\tilde{u}-u\,dx.\notag
\end{align}
Observing that $G'(-1+\phi)\leq G'(1+\kappa )$, we recover
\begin{align*}
E(\tilde{u})-E(u)\leq G'(1+\kappa )\int_{D\cup \Omega_-}\tilde{u}-u\,dx=0,
\end{align*}
where the equality is a consequence of (ii) and $\omegaint \tilde{u}\,dx=\omegaint u\,dx$.
\end{proof}
\subsection{Lower bound in the critical regime}\label{ss:lowcr}
We need an improved lower bound in the critical regime. The idea is that we can get an additional term from the integral over the ``bulk phase.'' (This additional term is higher-order in the off-critical regime.) The strategy is the same as before: On the one hand, we establish in \eqref{m:lowcr} a lower bound involving $V$; on the other hand, we use this estimate to deduce a lower bound on the maximum energy gap of any admissible path $\gamma\in\A$. (As for the off-critical case, this lower bound is not vacuous. According to \cite{BGLN}, \cite{CCELM}, or proposition \ref{upperbound}, the energy barrier is well defined for any $\phi=\xi L^ {-d/(d+1)}$ with $\xi>\xi_d$.)
\begin{prop}[Lower bound, critical regime]\label{prop:lowmcrit}
For any $d\geq 2$, there exists $\epsilon_0(d)>0$ with the following property. Given the critical scaling
\begin{align}
  \phi=\xi L^{-d/(d+1)}\qquad\text{for }\xi\in(\xi_d,\infty),\label{rexi}
\end{align}
any $u\in X_\phi$ with $E(u)\leq E(\baru)+C_{\xi_d}\phi^{-d+1}$ and $V(u)\leq\epsilon_0 L^d$ satisfies
\begin{align}
    &E(u)-E(\baru)\notag\\
    &\geq C_1(\phi)V(u)^{(d-1)/d}-\phi C_2(\phi)V(u)\notag\\
    &\;\;+\frac{G''(-1+2\phi^{1/3})\phi^{d+1}}{2\xi^{d+1}}\left(\int_{\Omega}(u-\bar{u})\big(\chi_2(u)+\chi_3(u)\big)\,dx\right)^2,\label{m:lowcr}
\end{align}
where $\xi_d$, $C_{\xi}=C_{\xi}(d)$ are defined in \eqref{xid} and \eqref{Cxi} and $C_1,\,C_2$ are as in proposition \ref{prop:lowm}.
As a consequence, the energy barrier defined in \eqref{ebd} satisfies the lower bound
\begin{align}
 \delE&\geq  C_\xi\phi^{-d+1}+o(\phi^{-d+1}).\label{m:lowecr}
\end{align}
\end{prop}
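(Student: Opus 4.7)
The plan is to follow the three-part decomposition used in proposition~\ref{prop:lowm}: write the energy gap via~\eqref{deco} with $\kappa=\phi^{1/3}$, then bound each of the three integrals separately. The estimates for the transition contribution (via the coarea formula and the Euclidean isoperimetric inequality~\eqref{iso} on the torus) and the droplet contribution (via lemma~\ref{lemma5}) carry over \emph{verbatim} from proposition~\ref{prop:lowm} and produce the first two terms $C_1(\phi)V(u)^{(d-1)/d}-\phi C_2(\phi)V(u)$ of~\eqref{m:lowcr}. What is new in the critical regime is that the bulk contribution $\int_\Omega e(u)\chi_1(u)\,dx$ can no longer simply be discarded as nonnegative (as in step~1 of proposition~\ref{prop:lowm}); its quadratic part must be retained.

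To retain it, I would Taylor-expand $G$ to second order about $\baru=-1+\phi$: for some $\eta_u(x)$ between $u(x)$ and $\baru$ one has $e(u)=\tfrac12|\nabla u|^2+\tfrac12 G''(\eta_u)(u-\baru)^2$. Since $G''(t)=3t^2-1$ is decreasing on $(-\infty,0]$ and $u\leq -1+2\phi^{1/3}$ on $\supp\chi_1$, the uniform pointwise bound $G''(\eta_u)\geq G''(-1+2\phi^{1/3})$ applies there. Dropping the gradient term yields
\begin{align*}
\int_\Omega e(u)\chi_1(u)\,dx \;\geq\; \tfrac12\, G''(-1+2\phi^{1/3})\int_\Omega (u-\baru)^2\chi_1(u)\,dx.
\end{align*}
The mean constraint converts this into the advertised volume-squared contribution: writing $\int_\Omega(u-\baru)\chi_1(u)\,dx=-\int_\Omega(u-\baru)(\chi_2+\chi_3)(u)\,dx$ and applying Cauchy--Schwarz against $\int_\Omega\chi_1(u)\,dx\leq L^d=\xi^{d+1}\phi^{-(d+1)}$ (by~\eqref{rexi}) produces exactly the third term of~\eqref{m:lowcr}.

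To pass from~\eqref{m:lowcr} to~\eqref{m:lowecr}, I would reduce the right-hand side to a function of $V(u)$ alone. The pointwise inequalities $(u-\baru)\chi_3(u)\geq(2-2\phi^{1/3}-\phi)\chi_3(u)$ on $\supp\chi_3$ and $(u-\baru)\chi_2(u)\geq 0$ give $\int_\Omega(u-\baru)(\chi_2+\chi_3)\,dx\geq (2+o(1))V(u)$. Substituting and rescaling by $\nu:=\phi^d V(u)$ delivers $E(u)-E(\baru)\geq \phi^{-(d-1)}\bigl(f_\xi(\nu)+o(1)\bigr)$ with $f_\xi$ from~\eqref{glambda}. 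The barrier estimate then follows by the same intermediate-value argument as in proposition~\ref{prop:lowm}: $V$ is continuous on $X_\phi$ with $V(\baru)=0$, and any $\gamma\in\A$ ends with $E(\gamma(1))<E(\baru)$, forcing $\nu(\gamma(1))$ beyond the first positive zero $\nu_\xi$ of $f_\xi$; by continuity some $t^\star\in(0,1)$ realizes $\nu(\gamma(t^\star))$ at the maximizer of $f_\xi$ on $[0,\nu_\xi]$, and~\eqref{Cxi} supplies the conclusion.

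The main obstacle I anticipate is the design of the bulk argument itself, i.e.\ choosing the Cauchy--Schwarz so that the mean constraint transfers a quadratic integral on $\supp\chi_1$ into the quadratic-in-droplet-volume term $4\xi^{-(d+1)}\nu^2$ matching~\eqref{glambda}. Two subordinate issues are (i)~tracking how the $\phi$-dependent prefactors $C_1(\phi)$, $C_2(\phi)$, $G''(-1+2\phi^{1/3})$ converge to $\bar C_1$, $\bar C_2$, $2$ so that all corrections collapse into the claimed $o(\phi^{-d+1})$ remainder, and (ii)~justifying that the mountain-pass step may restrict attention to paths satisfying the \emph{a priori} cap $\max_t E(\gamma(t))\leq E(\baru)+C_{\xi_d}\phi^{-d+1}$ from the hypothesis, since any $\gamma\in\A$ violating this cap already supplies $\max_t E-E(\baru)\geq C_{\xi_d}\phi^{-d+1}\geq C_\xi\phi^{-d+1}$ (using that $C_\xi$ is decreasing in $\xi>\xi_d$ by a direct envelope computation on $f_\xi$).
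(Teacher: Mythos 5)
Your proof is correct and follows the paper's architecture: smuggle in the mean constraint, split the energy gap via the $\chi_1,\chi_2,\chi_3$ partition, reuse the $\chi_2$- and $\chi_3$-estimates from proposition~\ref{prop:lowm}, and upgrade the $\chi_1$-term from plain nonnegativity to a second-order Taylor bound of $G$ plus Cauchy--Schwarz plus the mean constraint, which is exactly how the paper produces the third term of~\eqref{m:lowcr}. The passage to~\eqref{m:lowecr} via continuity of $V$ and the intermediate-value structure of $f_\xi$ on $[0,\nu_\xi]$ also matches, including the observation (your remark~(ii)) that the monotonicity of $C_\xi$ in $\xi$ disposes of paths violating the hypothesis's energy cap.

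There is one genuine, if modest, simplification in your route. To turn the squared-integral term into the desired $4\xi^{-(d+1)}\nu^2$ contribution, you use the pointwise bound $u-\baru>0$ on $\supp\chi_2$ (valid because $u\geq -1+\phi^{1/3}>-1+\phi=\baru$ there), so $\int_\Omega(u-\baru)\chi_2(u)\,dx\geq 0$ is immediate and the only surviving lower bound is $(2-2\phi^{1/3}-\phi)V(u)\geq 0$, which squares without further ado. The paper instead uses the cruder bound $u\geq -1$ on $\supp\chi_2$, which leaves a deficit term $-\phi\int_\Omega\chi_2(u)\,dx$; this must then be shown to be dominated by $V(u)$ (via $\phi\int_\Omega\chi_2\,dx\lesssim\phi^{-d+4/3}\ll\phi^{1/3}V(u)$ for $V(u)\gg\phi^{-d+1}$) and absorbed by the elementary inequality $(a+b)^2\geq(1-\delta)a^2-\delta^{-1}b^2$ with $\delta=\phi^{1/3}$. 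Your observation eliminates both of those steps. One detail worth stating explicitly (which you compress into ``carry over verbatim''): in proposition~\ref{prop:lowm} the isoperimetric step relied on the disposable assumption $E(u)\leq 2E(\baru)$, whereas here the analogous control comes from the explicit hypothesis $E(u)\leq E(\baru)+C_{\xi_d}\phi^{-d+1}$ together with $E(\baru)\sim\phi^{-d+1}$ in the critical scaling, so that $|\{u>-1+2\phi^{1/3}\}|\leq\epsilon L^d$ still holds.
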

\begin{proof}
We begin by establishing \eqref{m:lowcr}. In light of
\begin{align}
  E(\baru)\sim \phi^{-d+1}\quad\text{in the critical regime},\label{hrom}
\end{align}
the condition $E(u)\leq E(\baru)+C_{\xi_d}\phi^{-d+1}$ implies
\begin{align}
  E(u)\lesssim E(\baru).\label{ele}
\end{align}
As in the proof of proposition \ref{prop:lowm}, we observe that the integral of $e(u)$ over the support of $\chi_3$ and $\chi_2$ is estimated by \eqref{st2} and \eqref{stp3}, respectively (where in order to deduce \eqref{stp3}, we replace the energy bound \eqref{enbu} by the bound \eqref{ele}).
The estimate \eqref{m:lowcr} then follows directly from the  improved bound on the support of $\chi_1$:
\begin{align}\label{proofcritical1}
&\int_{\Omega}e(u)\chi_1(u)\,dx\notag\\
&\geq \frac{G''(-1+2\phi^{1/3})\phi^{d+1}}{2\xi^{d+1}}\left(\int_{\Omega}(u-\bar{u})\big(\chi_2(u)+\chi_3(u)\big)\,dx\right)^2.
\end{align}
To see \eqref{proofcritical1}, we use the strict convexity of $G$ on $(-\infty,-1+2\phi^{1/3}]$ to estimate
\begin{align}
  \int_\Omega e(u)\chi_1(u)\,dx&\geq \int_\Omega \Big(G(u)-G(\baru)-G'(\baru)(u-\baru)\Big)\chi_1(u)\,dx\notag\\
  &\geq \inf_{\tau\in(-\infty,-1+2\phi^{1/3})}G''(\tau)\frac{1}{2} \int_\Omega (u-\baru)^2\chi_1(u)\,dx\notag\\
  &= \frac{G''(-1+2\phi^{1/3})}{2}\int_\Omega (u-\baru)^2\chi_1(u)\,dx.\label{gcan}
\end{align}
From H\"older's inequality, the simplistic bound $\int_\Omega \chi_1(u)\,dx\leq L^d$, and
$$\omegaint (u-\bar{u})\big(\chi_1(u)+\chi_2(u)+\chi_3(u)\big)\,dx=0,$$
we deduce
\begin{align}
 \int_{\Omega}(u-\bar{u})^2\chi_1(u)\,dx &\geq L^{-d}\left(\int_\Omega(u-\bar{u})\chi_1(u)\,dx\right)^2\notag\\
 &= L^{-d}\left(\int_{\Omega}(u-\bar{u})\big(\chi_2(u)+\chi_3(u)\big)\,dx\right)^2.\label{uc}
\end{align}
Substituting \eqref{uc} into \eqref{gcan} and recalling \eqref{rexi} gives \eqref{proofcritical1}.

Now we deduce \eqref{m:lowecr} from \eqref{m:lowcr}. To this end, consider any continuous path $\gamma\in\A$ (where $\A$ is defined in \eqref{admis}). Without loss of generality, we need only consider paths such that
\begin{align*}
\max_{t\in[0,1]}E(\gamma(t))-E(\baru)\leq C_\xi\phi^{-d+1}.
 \end{align*}
The monotonicity of $C_\xi$ with respect to $\xi$ gives $\max_{t\in[0,1]}E(\gamma(t))-E(\baru)\leq C_\xi\phi^{-d+1}\leq C_{\xi_d}\phi^{-d+1}$, so that \eqref{m:lowcr} holds for all $t\in [0,1]$ such that $V(\gamma(t))\leq \epsilon_0L^d$.
As usual we rely on the continuity of $V$ on $X_\phi$ and $V(\baru)=0$. Also we remark that the right-hand side of \eqref{m:lowcr} is positive for small, positive $V$. Hence it suffices to argue that the right-hand side of \eqref{m:lowcr} takes on the value   $C_\xi\phi^{-d+1}+o(\phi^{-d+1})$ for some $0<V\leq \epsilon_0L^d$ smaller than the first strictly positive zero of the right-hand side of \eqref{m:lowcr}, which we note is at least of the order $\phi^{-d}\ll \epsilon_0L^d$. Combining these observations, it suffices to show that the right-hand side of \eqref{m:lowcr} takes on the value   $C_\xi\phi^{-d+1}+o(\phi^{-d+1})$ for some
\begin{align*}
  0<V\lesssim \phi^{-d}.
\end{align*}

We would like to transform the third term on the right-hand side of \eqref{m:lowcr} for ``intermediate'' values of $V$. We observe that
\begin{align*}
\int_{\Omega}u\chi_2(u)\,dx\geq-\int_{\Omega}\chi_2(u)\,dx \quad \mbox{and} \quad \int_{\Omega}u\chi_3(u)\,dx\geq (1-2\phi^{1/3})V(u),
\end{align*}
from which it follows that
\begin{align}\label{BunionC}
 \lefteqn{\int_{\Omega}(u-\bar{u})\big(\chi_2(u)+\chi_3(u)\big)\,dx}\notag\\
  &\geq -\phi \int_{\Omega}\chi_2(u)\,dx+(2-2\phi^{1/3}-\phi)V(u).
\end{align}
We  claim that $V(u)$ dominates $\phi\int_\Omega \chi_2(u)\,dx$ for $u\in X_\phi$ with
\begin{align}
E(u)\lesssim E(\baru)\quad\text{and}\quad  V(u)\gg \phi^{-d+1}.\label{reg2}
\end{align}
 Indeed, from $G(s)\geq \frac{1}{2}\phi^{2/3}$ on the support of $\chi_2$, we have
\begin{align*}
\int_\Omega \chi_2(u)\,dx \leq 2\phi^{-2/3}\int_\Omega G(u)\chi_2(u)\,dx\leq 2\phi^{-2/3}E(u)\overset{\eqref{ele}}\lesssim \phi^{-2/3}E(\baru),
\end{align*}
which, combined with \eqref{hrom}, implies
\begin{align}
\phi\int_{\Omega}\chi_2(u)\,dx\lesssim \phi^{-d+4/3}\ll \phi^{1/3}V(u)\label{hhrom}
\end{align}
for $V(u)\gg \phi^{-d+1}$.
It follows that the right-hand side of \eqref{BunionC} is positive for $\phi$ sufficiently small and hence
\begin{align}
\lefteqn{\left(\int_{\Omega}(u-\bar{u})\big(\chi_2(u)+\chi_3(u)\big)\,dx\right)^ 2}\notag\\
&\geq\left((2-2\phi^{1/3}-\phi)V(u)-
    \phi\int_\Omega\chi_2(u)\,dx\right)^2.\label{intmd}
\end{align}
We apply the elementary inequality $(a+b)^2\geq (1-\delta)a^2-\delta^{-1}b^2$
with $\delta=\phi^{1/3}$ to deduce
\begin{eqnarray}
 \lefteqn{\left((2-2\phi^{1/3}-\phi)V(u)-
    \phi\int_\Omega\chi_2(u)\,dx\right)^2}\notag\\
    &\geq & (1-\phi^{1/3})\Big((2-2\phi^{1/3}-\phi)V(u)\Big)^2-\phi^{-1/3}\Big(\phi\int_\Omega\chi_2(u)\,dx\Big)^2\notag\\
    &\overset{\eqref{hhrom}}\geq&
    (1-\phi^{1/3})\Big((2-2\phi^{1/3}-\phi)V(u)\Big)^2-\phi^{1/3}V(u)^2\notag\\
    &\geq&(1-2\phi^{1/3})\Big((2-2\phi^{1/3}-\phi)V(u)\Big)^2.\label{nomb}
\end{eqnarray}
Combining \eqref{m:lowcr}, \eqref{intmd}, and \eqref{nomb} implies that for $u$ satisfying \eqref{reg2}, we have
\begin{align}
  E(u)-E(\baru)&\geq C_1(\phi)V(u)^{(d-1)/d}-\phi C_2(\phi)V(u)\notag\\
  &\qquad+\frac{C_3(\phi)\phi^{d+1}}{\xi^{d+1}}V(u)^2,\label{nbd}
\end{align}
where $C_1,\,C_2$ are defined in \eqref{c1}, \eqref{c2} and
\begin{align}
C_3(\phi):=\frac{G''(-1+2\phi^{1/3})}{2}(2-2\phi^{1/3}-\phi)^2(1-2\phi^{1/3}).  \label{c3}
\end{align}
Letting $\nu:=\phi^{d}V$, we rewrite \eqref{nbd} as
\begin{align}
&E(u)-E(\baru)\notag\\
&\geq \phi^{-d+1}\left( C_1(\phi)\nu(u)^{(d-1)/d}-C_2(\phi)\nu(u)+C_3(\phi)\xi^{-(d+1)}\nu(u)^2\right).\label{romb2}
\end{align}
We view the right-hand side of \eqref{romb2} as a function of $\nu$ and, considering the behavior of $C_1,\,C_2,\,C_3$ for $\phi\downarrow 0$,  observe that
\begin{align}
  f_{\phi,\xi}(\nu):&=C_1(\phi)\nu^{(d-1)/d}-C_2(\phi)\nu+C_3(\phi)\xi^{-(d+1)}\nu^2\notag\\
  &=f_\xi(\nu)+o(1),\notag
\end{align}
where $f_\xi:\R^+\to\R$ is defined in \eqref{glambda}. We  use \eqref{romb2} and the behavior of $f_{\phi,\xi}$ to deduce a lower bound on the energy barrier. Recall the definitions of $\nu_\xi$, $C_\xi$ (cf.\ \eqref{Cxi}). Analogously, let $\nu_{\phi,\xi}$ denote the first strictly positive zero of $f_{\phi,\xi}$. We deduce from \eqref{romb2} that any $\gamma\in \A$ satisfies
\begin{align*}
  \max_{t\in[0,1]}E(\gamma(t))-E(\baru)&\geq \phi^{-d+1}\;\sup\Big\{f_{\phi,\xi}(\nu)\colon \phi\ll\nu\leq \nu_{\phi,\xi}\Big\}\\
  &=C_\xi\phi^{-d+1}+o(\phi^{-d+1}).
\end{align*}
\end{proof}
\section {Upper bounds}\label{s:up}
In this section, we develop an upper bound for the energy barrier $\delE$ by constructing a continuous
path that connects the uniform state $\bar{u}$ to a state of lower energy and estimating the maximum energy along the path. As explained in subsection~\ref{ss:lit}, the main building block of our construction is the construction of~\cite{CCELM}, in which the energy of a ``droplet state'' is estimated as a function of the radius of the droplet.
There are a few differences in our setting, however, since we need to keep more terms and since the relative size of the error terms in the off-critical scaling is not as straightforward as in the critical case. For completeness, we include the details.

The first ingredient is the hyperbolic tangent function
\begin{align}\label{v}
v(x)=-\tanh (x/\sqrt{2}),
\end{align}
which is a minimizer of the energy on $\R$ subject to $\pm 1$ boundary conditions, so that in particular
\begin{align*}
E(v)=\int_\mathbb{R}\frac{1}{2}v_x^2+G(v)\,dx=c_0,
\end{align*}
for $c_0$ defined in \eqref{c0}.

The next step is to modify $v$ so that it reaches $\pm 1$ at finite distance from the origin.
For $R>0$, one defines an odd function $v_R:\R \rightarrow \R$ such that
\begin{align}\label{v_R}
v_R(x) := \begin{cases} v(x) & \textrm{for}\quad |x| <R \\ -\sgn (x) & \textrm{for}\quad |x| > 2R, \end{cases} \qquad
\end{align}
with a smooth, monotone interpolation on $R\leq |x|\leq 2R$.

As explained in subsection~\ref{ss:lit}, the idea of \cite{BCK1}, which is also used in \cite{CCELM}, is to put \emph{part of the total mass $V_+$} defined in \eqref{vplus} into a droplet. Consider the fractional volume $\eta V_+$ for $\eta \in [0,1]$ and define the corresponding radius
\begin{align}\label{reta}
r_\eta:=\eta^{\frac{1}{d}} \left(\frac{\phi d}{2 \sigma _d}\right)^{\frac{1}{d}}L.
\end{align}
The main building block of our construction is a trial function of the form
\begin{align}\label{trialfunction}
u_\eta(x):=v_R(|x|-r_\eta)+\alpha(\eta),
\end{align}
where $R>0$ is to be specified and $\alpha (\eta)$ is a constant chosen to accommodate the mean constraint from \eqref{H-phi}.
The droplet state $u_\eta$ can be viewed as a ``fractional droplet."

While in \cite{CCELM} the idea of the fractional droplet is used to study the global energy minimizer, we observe below that the path of growing droplets parameterized by $\eta$ provides an energetically favorable path out of the basin of attraction of $\baru$. Our upper bounds take the following form.
\bprop\label{upperbound}[Upper bounds]
Consider $\A$  defined in \eqref{admis}.
In the off-critical regime \eqref{subcritical}, there exists a continuous path $\gamma\in\A$ such that
\begin{align}
\max_{t\in [0,1]}E(\gamma(t))-E(\baru)\leq C_*\phi ^{-d+1}+o(\phi^{-d+1}),\label{no.1}
\end{align}
where $C_*$ is given by \eqref{constant}.

In the critical regime with $\phi=\xi L^{-d/(d+1)}$ for $\xi>\xi_d$ defined by \eqref{xid}, there exists a continuous path $\gamma\in\A$ such that
\begin{align}
\max_{t\in [0,1]}E(\gamma(t))-E(\baru)\leq C_\xi\phi ^{-d+1}+o(\phi^{-d+1}),\label{no.2}
\end{align}
where $C_\xi$ is given by \eqref{Cxi}.
\eprop
We begin in subsection \ref{ss:lemup} by presenting (without proof) the lemmas that we will need in order to bound the energy of our constructions. Then in subsection \ref{ss:consup}, we use these estimates to prove proposition \ref{upperbound}. Finally in subsection \ref{ss:pfup}, we give the proofs of the lemmas.
\subsection{Lemmas for the upper bound constructions}\label{ss:lemup}
Our main goal is a good bound on the energy gap of the ``droplets functions'' $u_\eta$ described above, at least for droplets of \emph{moderate radius}. In order to connect these functions to $\baru$, we  need an elementary lemma that says that we can interpolate between $\baru$ and a ``moderately sized droplet'' while keeping the energy gap well below $\phi^{-d+1}$.
\blemma\label{lemma2}
There exists $C\in\R$ with the following property. Fix any $R\geq 1$ and let $w_R(x):=v_R(|x|-R)+\alpha$ with $\alpha$ chosen so that $\omegaint w_R(x)\,dx=(-1+\phi)L^d$.
For $\lambda \in [0,1]$, let $u_\lambda$ denote the convex combination
\begin{align*}
u_\lambda:=(1-\lambda)\unif+\lambda w_R.
\end{align*}
Then $u_\lambda\in X_\phi$ and
\begin{align*}
E(u_\lambda)\leq E(\unif)+CR^d.
\end{align*}
In particular, for every $\lambda \in [0,1]$ there holds
\begin{align*}
\Big(E(u_\lambda)-E(\unif)\Big)_+\ll \phi ^{-d+1}
\end{align*}
as long as $R\ll \phi^{-1+1/d}$.
\elemma
Now let us consider the droplets. Our first ingredient is an estimate of the constant $\alpha$ from \eqref{trialfunction}. The lemma
is a slight adaptation of \cite [lemma 2.1]{CCELM}.
\blemma\label{lemma1}
In the off-critical or critical regime, there exist constants $C, R_0<\infty$ with the following property. For any $R\geq R_0$ and $\reta \in [R, r_+]$, there holds
\begin{align}\label{integral1}
\int_\Omega v_R(|x|-r_\eta)\,dx=L^d(-1+\phi \eta)+r_\eta^{d-2}\left(C_1+O(e^{-R/C})\right)+\epsilon,
\end{align}
where $C_1:=(d-1) \sigmad \int_{-\infty}^\infty \left(\sgn (\xi)+v(\xi)\right)\xi \,d\xi>0$ and the error term is given by
\begin{align*}
\epsilon=
\begin{cases}
O(e^{-R/C}) & \text{for}\quad d=2, 3, \\
O(r_\eta^{d-4}) & \text{for}\quad d\geq 4.
\end{cases}
\end{align*}
Here $v$, $v_R$, and $\reta$ are given by \eqref{v}, \eqref{v_R}, and \eqref{reta}, respectively, and $r_+$ denotes the radius of a ball of volume $V_+$ defined in \eqref{vplus}.

As a result, in  the off-critical or critical regime, the constant $\alpha$ appearing in \eqref{trialfunction} satisfies
\begin{align}\label{alpha}
\alpha(\eta)=\phi (1-\eta)-\frac{C_1}{L^d}\reta^{d-2}\left(1+O(e^{-R/C})\right)-\frac{1}{L^d}O(\reta^{d-4}).
\end{align}
\elemma
\begin{rmrk}\label{1-eta}
Note that for $1-\eta\gg \phi^{-2/d}L^{-2}$---which in both the off-critical and critical regimes is satisfied for $1-\eta \gg \phi ^2$---
the first term in \eqref{alpha} is dominant, i.e., $\phi (1-\eta)\gg \reta^{d-2}L^{-d}$. In what follows, it will suffice to restrict to $\eta$ values such that $1-\eta\gg\phi^2$, and \eqref{alpha} will help in estimating the energy of $u_\eta$.
\end{rmrk}
We turn to an estimate of the energy of $u_\eta$. The following lemma is a slight modification of \cite[lemma 2.2]{CCELM}.
\blemma\label{l:ueta}
There exist constants $C, R_0<\infty$ so that, for any $R\geq R_0$ and $\reta\geq R$, $1-\eta\gg\phi^2$, the energy of $u_\eta$ in the off-critical or critical regime satisfies
\begin{align}
E(u_\eta)&=c_0\sigmad \reta^{d-1}\left(1+O(e^{-R/C})\right)+ \phi^2L^d(1-\eta)^2-\phi^3L^d(1-\eta)^3\notag\\
&+\frac{\phi^4L^d}{4}(1-\eta)^4 +O\left(\phi (1-\eta)\reta^{d-2}\right)+O\left(\phi^2(1-\eta)^2\reta^{d-1}\right)\notag\\
&+O\left(\phi^4\eta(1-\eta)^3L^d\right)+O(\reta^{d-3}).\label{s.six}
\end{align}
Here $\reta$ and $u_\eta$ are given by \eqref{reta} and \eqref{trialfunction}, respectively.
\elemma
\begin{rmrk}\label{rem:eunif}
Recall from formula \eqref{E(unif)} that $E(\bar{u})=\phi ^2 L^d-\phi ^3 L^d+\frac{\phi ^4}{4}L^d$, so that \eqref{s.six} implies, for all $r_\eta\geq R$ and $\eta$ such that $1-\eta\gg \phi^2$, that
\begin{align}
E(u_\eta)-E(\baru)&=c_0\sigmad \reta^{d-1}\left(1+O(e^{-R/C})\right)+ \phi^2L^d(-2\eta+\eta^2)\notag\\
&\quad -\phi^3L^d(-3\eta+3\eta^2-\eta^3)+\frac{\phi^4L^d}{4}(-4\eta+6\eta^2-4\eta^3+\eta^4) \notag\\
&\quad +O\left(\phi (1-\eta)\reta^{d-2}\right)+O\left(\phi^2(1-\eta)^2\reta^{d-1}\right)\notag\\
&\quad +O\left(\phi^4\eta(1-\eta)^3L^d\right)+O(\reta^{d-3}).\label{s.2}
\end{align}
For consistency with the notation we used in the lower bounds, we substitute the definition \eqref{reta} of $r_\eta$ and reexpress this estimate in terms of the volume
\begin{align}
    V_\eta:&=\frac{\sigma_d}{d}r_\eta^d \overset{\eqref{reta}}=\eta\frac{\phi L^d}{2}\overset{\eqref{vplus}}=\eta V_+.\label{veta}
\end{align}
This leads to the observation that for $R\gg 1$ and $V_\eta$ satisfying
\begin{align}
  \frac{\sigma_d}{d}R^d\leq V_\eta \ll V_+,\label{vlim2}
\end{align}
one has in the off-critical regime that
\begin{align}
E(u_\eta)-E(\baru)\leq  \bar{C}_1 V_\eta^{(d-1)/d} -4\phi V_\eta +o(\phi^{-d+1}), \label{mainoff}
\end{align}
and that for $R\gg 1$ and $V_\eta$ satisfying
\begin{align}
  \frac{\sigma_d}{d}R^d\leq V_\eta\quad\text{and}\quad V_+-V_\eta\gg \phi^2 V_+,\label{vlim}
\end{align}
one has in the critical regime that
\begin{align}
  E(u_\eta)-E(\baru)\leq  \bar{C}_1 V_\eta^{(d-1)/d} -4\phi V_\eta+\frac{4\phi^{d+1}V_\eta^2}{\xi^{d+1}} +o(\phi^{-d+1}), \label{maincr}
\end{align}
where $\bar{C}_1$ is defined in \eqref{barc1}. We will derive our control of the energy barrier from \eqref{mainoff} and \eqref{maincr}.
\end{rmrk}
\subsection{Proof of proposition  \ref{upperbound}}\label{ss:consup}
\begin{proof}[Proof of proposition \ref{upperbound}]
We use the construction from lemma \ref{lemma2} for the first part of the path and the construction from lemma \ref{l:ueta} for the second part of the path. According to lemma \ref{lemma2}, as long as $R\ll \phi ^{-1+1/d}$, the contribution from the first part of the path is negligible with respect to the right-hand side of \eqref{no.1}, \eqref{no.2}, respectively. Hence we choose $R$ to satisfy $1\ll R\ll \phi ^{-1+1/d}$, and our main task is to analyze \eqref{mainoff} and \eqref{maincr} in the off-critical and critical regimes, respectively.

We begin with the off-critical regime. Using $V_+\gg \phi^{-d}$ in the off-critical regime, the condition \eqref{vlim2} and estimate \eqref{mainoff} can be reexpressed in terms of the rescaled volume $\nu_\eta=\phi^{d}V_\eta$ in the following way: For any $C<\infty$ and for all $\nu_\eta$ with
\begin{align}
 \frac{\sigma_d}{d}R^d\phi^d\leq \nu_\eta\leq C,\label{nulim}
\end{align}
we have for $\phi\ll 1$ that
\begin{align}
E(u_\eta)-E(\baru)\leq  \phi^{-d+1}f_\infty(\nu_\eta)+o(\phi^{-d+1}),\label{mainoff2}
\end{align}
where $f_\infty$ is defined in \eqref{f0} (and is independent of $\phi$).
Notice that $R^d\phi^d\ll 1$ (by choice of $R$). To deduce \eqref{no.1}  from \eqref{mainoff2}, it therefore suffices to check that
\begin{enumerate}
  \item[(i)] there exists $\nu_->0$  such that $f_\infty(\nu_-)<0$,
  \item[(ii)] $\sup\Big\{ f_\infty(\nu)\colon 0\leq \nu\leq \nu_- \Big\}\leq C_*$.
\end{enumerate}
Indeed,
\eqref{mainoff2} and (i) imply that there exists a point $\nu_-$ satisfying \eqref{nulim} and a corresponding function $u_{\eta_-}$ along our constructed path such that $E(u_{\eta_-})<E(\baru)$, while \eqref{mainoff2} and (ii) imply that the energy along the second part of the path until reaching $u_{\eta_-}$ stays below $C_*\phi^{-d+1}+o(\phi^{-d+1})$.
The observations (i) and (ii) concerning $f_\infty$ are elementary (and were already made in  subsection \ref{ss:lit}).
This concludes the proof of \eqref{no.1}.

We now consider the critical regime. Using $V_+=\phi^{-d}\xi^{d+1}/2$ in the critical regime, \eqref{vlim} and \eqref{maincr} can be rewritten in terms of the rescaled volume $\nu_\eta=\phi^{d}V_\eta$ in the following way: For $\nu_\eta$ satisfying
\begin{align}
 \frac{\sigma_d}{d}R^d\phi^d\leq \nu_\eta\quad\text{and}\quad \frac{\xi^{d+1}}{2}-\nu_\eta\gg \phi^2,  \label{nulim3}
\end{align}
we have
\begin{align}
E(u_\eta)-E(\baru)\leq  \phi^{-d+1}f_\xi(\nu_\eta)+o(\phi^{-d+1}),\label{nono}
\end{align}
where $f_\xi$ is defined in \eqref{glambda} (and is independent of $\phi$).
By choice of $R$,  $R^d\phi^d\ll 1$ as in the off-critical regime.
To deduce \eqref{no.2} from \eqref{nulim3} and \eqref{nono}, it suffices to check that
\begin{enumerate}
  \item[(i')] there exists $0<\nu_-<\frac{1}{2}\xi^{d+1}$ such that $f_\xi(\nu_-)<0$,
  \item[(ii')] $\sup\Big\{ f_\xi(\nu)\colon 0\leq \nu\leq \nu_-\Big\}\leq C_\xi$.
\end{enumerate}
Condition (ii') is automatically satisfied by the definition \eqref{Cxi} of $C_\xi$.
To check condition (i'), we write $f_\xi$ as the product
\begin{align*}
  f_\xi(\nu)=\nu\big(\bar{C}_1\nu^{-1/d}-4+4\xi^{-(d+1)}\nu\big).
\end{align*}
Defining $g_\xi:=\bar{C}_1\nu^{-1/d}-4+4\xi^{-(d+1)}\nu$, we observe via elementary calculus that $\lim_{\nu\downarrow 0}g_\xi(\nu)=\infty$ and $g_\xi(\nu_m)<0$ where $\nu_m$ denotes the local minimum
\begin{align*}
  \nu_m:=\frac{c_0^{d/(d+1)}\sigma_d^{1/(d+1)}}{4^{d/(d+1)}d^{1/(d+1)}}\xi^d.
\end{align*}
Clearly $\nu_m<\frac{1}{2}\xi^{d+1}$ precisely if
\begin{align}
  \xi>\frac{2c_0^{d/(d+1)}\sigma_d^{1/(d+1)}}{4^{d/(d+1)}d^{1/(d+1)}}.\label{xibig}
\end{align}
Since $\xi>\xi_d$ (defined in \eqref{xid}), it enough to check whether $\xi_d$ satisfies \eqref{xibig}, which it does if and only if
$d>1$.
We deduce that condition (i') holds and hence, \eqref{no.2} is established.
\end{proof}
\subsection{Proofs of lemmas}\label{ss:pfup}
\begin{proof}[Proof of lemma \ref{lemma2}]
The fact that $\omegaint u_\lambda\,dx=(-1+\phi)L^d$ follows immediately from linearity of the integral and the choice of $w_R$. Let $0<\tilde{r}<+\infty$ be defined through $w_R(x)=-1+\phi$ for $|x|=\tilde{r}$. Using the fact that $0<\alpha <\phi$ (c.f. \eqref{alpha}) we can easily see that $R<\tilde{r}<3R$. Consequently, for $\lambda \in [0,1]$ we have
\begin{align}\label{stupid1}
\int_{B_{\tilde{r}}(0)}G(u_\lambda)dx\lesssim |B_{\tilde{r}}(0)|\sim R^d,
\end{align}
where $B_{\tilde{r}}(0)$ denotes the open ball of radius $\tilde{r}$ that is centered at the origin.
Also
\begin{align}\label{stupid2}
\int_{\Omega\setminus B_{\tilde{r}}(0)}G(u_\lambda)dx\leq \int_{\Omega\setminus B_{\tilde{r}}(0)}G(\unif)dx\leq E(\unif),
\end{align}
since $G(u_\lambda)\leq G(-1+\phi)$ on $\Omega\setminus B_{\tilde{r}}(0)$.
The gradient term of the energy of $w_R$ is
\begin{align*}
\omegaint|\nabla w_R|^2dx&=\omegaint |\nabla v_R(|x|-R)|^2dx=\sigmad \int_0^\infty \xi^{d-1}(v_R'(\xi-R))^2d\xi \\ & \leq \sigmad R^{d-1}\int_{-\infty}^\infty (v_R'(\xi))^2\left|1+\frac{\xi}{R}\right|^{d-1}d\xi.
\end{align*}
The last integral is bounded independently of $R$. To see why, first note that the integrand vanishes outside the interval $[-2R,2R]$, therefore
\begin{align*}
\int_{-\infty}^\infty (v_R'(\xi))^2\left|1+\frac{\xi}{R}\right|^{d-1}d\xi&\leq C \int_{-\infty}^\infty (v_R'(\xi))^2d\xi\\
&= C\left(\int_{-R}^R(v'(\xi))^2d\xi+2\int_R^{2R}(v_R'(\xi))^2d\xi\right)\\
& \leq C \left(\int_{-\infty}^\infty(v'(\xi))^2d\xi+\int_R^{2R}(v_R'(\xi))^2d\xi\right).
\end{align*}
Since $v'(\xi)$ decays exponentially, the first integral inside the brackets is finite. Also, since on $[R,2R]$ we have $v_R'(\xi)=O(e^{-R/C})$, the second integral in the brackets above is also bounded independently of $R$.
We therefore have
\begin{align*}
\omegaint |\nabla w_R|^2dx \lesssim R^{d-1},
\end{align*}
which, in combination with \eqref{stupid1} and \eqref{stupid2}, implies that
\begin{align*}
E(u_\lambda)= \omegaint \lambda^2|\nabla w_R|^2+G(u_\lambda)\,dx\leq E(\unif)+O(R^d).
\end{align*}
\end{proof}
\begin{proof}[Proof of lemma \ref{lemma1}]
Let $B_{r_\eta}(0)$ denote the open ball of radius $\reta$ centered at the origin. Since $\reta \leq r_+ \ll L$, we have $B_{\reta} (0)\subset \Omega$. Note that
\begin{align*}
\omegaint \sgn(|x|-r_\eta)dx=L^d-2|B_{r_\eta}(0)|=L^d(1-\phi \eta).
\end{align*}
Consequently, we can write
\begin{align}\label{integral2}
\omegaint v_R(|x|-r_\eta)dx=L^d(-1+\phi \eta)+\omegaint \sgn(|x|-r_\eta)+v_R(|x|-r_\eta)\,dx.
\end{align}
Comparing \eqref{integral1} and \eqref{integral2}, it suffices to estimate the second term on the right-hand of side \eqref{integral2}, which we decompose as
\begin{align*}
\omegaint \sgn(|x|-r_\eta)+v_R(|x|-r_\eta)\,dx = I_1+I_2,
\end{align*}
where
\begin{align*}
I_1:=\int_{\Omega \cap \{|x|>2r_\eta\}}1+v_R(|x|-r_\eta)\,dx
\end{align*}
and
\begin{align*}
I_2:=\int_{\{|x|<2r_\eta\}}\sgn(|x|-r_\eta)+v_R(|x|-r_\eta)\,dx.
\end{align*}
Since for $|x|>2r_\eta$ we have
\begin{align*}
|1+v_R(|x|-r_\eta)|\leq e^{-(|x|+R)/C}
\end{align*}
for some $0<C<\infty$ that is independent of $R$, it follows that
\begin{align}\label{I_1}
I_1=O(e^{-R/C}).
\end{align}
Turning to $I_2$, we introduce polar coordinates in order to express
\begin{align*}
I_2 &=\sigmad \int_0^{2r_\eta}\Big(\sgn(\xi-\reta )+v_R (\xi-\reta)\Big)\xi^{d-1}d\xi \\ &= \sigmad \int_{-\reta}^{\reta}\Big(\sgn(\xi)+v_R (\xi)\Big)(\xi+\reta)^{d-1}d\xi.
\end{align*}
It is convenient to denote the right-hand side as $I_2'-I_2''$, where
\begin{align*}
I_2':=\sigmad \int_{-\infty}^\infty \Big(\sgn (\xi)+v_R(\xi)\Big)(\xi+\reta)^{d-1}d\xi
\end{align*}
and
\begin{align*}
I_2'':=\sigmad \int_{\R\setminus [-\reta,\reta]} \Big(\sgn (\xi)+v_R(\xi)\Big)(\xi+\reta)^{d-1}d\xi.
\end{align*}
We write $I_2'$ as
\begin{align*}
I_2'&=\sigmad \reta^{d-1}\int_{-\infty}^\infty \Big(\sgn (\xi)+v_R(\xi)\Big)\left(1+\frac{\xi}{\reta}\right)^{d-1}d\xi \\ & =\sigmad \reta^{d-1}\int_{-\infty}^\infty \Big(\sgn (\xi)+v_R(\xi)\Big)\sum_{k=0}^{d-1}\binom{d-1}{k}\left(\frac{\xi}{\reta}\right)^kd\xi.
\end{align*}
Using the fact that $\sgn (\xi)+v_R(\xi)$ is odd in $\xi$, we obtain
\begin{align*}
I_2'=\sigmad \reta^{d-2}(d-1)\int_{-\infty}^\infty \Big(\sgn (\xi)+v_R(\xi)\Big)\xi d\xi +O(\reta^{d-4}),
\end{align*}
where the $O(\reta^{d-4})$ term appears only for $d\geq 4$. Furthermore, since
\begin{align*}
|v(\xi)-v_R(\xi)|\leq e^{-(|\xi|+R)/C},
\end{align*}
we can express $I_2'$ in terms of the $R$-independent profile $v$ as
\begin{align}\label{I_2'}
I_2'&=\sigmad \reta^{d-2}(d-1)\left(\int_{-\infty}^\infty \Big(\sgn (\xi)+v(\xi)\Big)\xi d\xi+O(e^{-R/C})\right)+O(\reta^{d-4})\nonumber \\ &=C_1\reta^{d-2}\Big(1+O(e^{-R/C})\Big)+O(\reta^{d-4}),
\end{align}
again with the $O(\reta^{d-4})$ term appearing only for $d\geq 4$.

Finally, using the fact that $|\sgn(\xi)+v_R(\xi)|\leq e^{-|\xi|/C}$, we similarly obtain
\begin{align}\label{I_2''}
I_2''=O(e^{-R/C})
\end{align}
from which the result follows by combining \eqref{I_1} with \eqref{I_2'} and \eqref{I_2''}, and noting that, for $d\geq 4$, the $O(e^{-R/C})$ term can be absorbed into the $O(\reta^{d-4})$ error term.
\end{proof}
\begin{proof}[Proof of lemma \ref{l:ueta}]
In the proof we will abbreviate by writing $v_R$ instead of $v_R(|x|-\reta)$. We decompose the energy of $u_\eta$ as
\begin{align}\label{energy decomposition}
E(u_\eta)&=\omegaint \frac{1}{2}|\nabla v_R|^2+G(v_R)\,dx+\alpha \omegaint G'(v_R)\,dx+\frac{\alpha ^2}{2}\omegaint G''(v_R)\,dx \nonumber \\& \qquad \qquad +\frac{\alpha ^3}{3!}\omegaint G'''(v_R)\,dx+\frac{\alpha^4}{4!}\omegaint G^{(4)}(v_R)\,dx \nonumber \\&=:I_0+I_1+I_2+I_3+I_4.
\end{align}
We now estimate each of the terms in \eqref{energy decomposition}.

\underline{Estimate of $I_0$}.
Introducing polar coordinates and using the compact support of $|\nabla v_R|$ and $G(v_R)$, we write
\begin{align}
I_0&=\omegaint \frac{1}{2}|\nabla v_R|^2+G(v_R)\,dx\notag\\
&=\int_0^\infty \sigmad \xi^{d-1}\left(\frac{1}{2}(v_R'(\xi-\reta))^2+G(v_R(\xi-\reta))\right)d\xi \nonumber \\ &=\int_{-\reta}^\infty \sigmad \left(\frac{1}{2}(v_R'(\xi))^2+G(v_R(\xi))\right)(\reta+\xi)^{d-1}d\xi \nonumber \\
& = \int_{-\infty}^\infty \sigmad \left(\frac{1}{2}(v_R'(\xi))^2+G(v_R(\xi))\right)(\reta+\xi)^{d-1}d\xi\notag\\
&\quad+\int_{-\infty}^{-\reta} \sigmad \left(\frac{1}{2}(v_R'(\xi))^2+G(v_R(\xi))\right)(\reta+\xi)^{d-1}d\xi\notag\\
&=:I_0'-I_0''.\label{I_0}
\end{align}
Using again that $v_R$ is odd and exponentially close to $v$ for large $|x|$, we estimate
\begin{align}\label{I_0'}
I_0' &=\sigmad \reta ^{d-1}\int_{-\infty}^\infty \left(\frac{1}{2}(v_R'(\xi))^2+G(v_R(\xi))\right)\left(1+\frac{\xi}{\reta}\right)^{d-1} d\xi \nonumber \\ & =\sigmad \reta^{d-1}\int_{-\infty}^\infty \left(\frac{1}{2}(v'(\xi))^2+G(v(\xi))+O\left(e^{-(R+|\xi|)/C}\right)\right)\left(1+\frac{\xi}{\reta}\right)^{d-1}d\xi \nonumber \\ & =c_0\sigmad \reta^{d-1}+O\left(\reta^{d-1}e^{-R/C}\right)+O(\reta^{d-3}),
\end{align}
where the $O(\reta^{d-3})$ term appears only when $d\geq 3$.
We also have
\begin{align*}
|I_0''|&=\left|\sigmad \reta^{d-1}\int _{\reta}^\infty \left(\frac{1}{2}(v_R'(\xi))^2+G(v_R(\xi))\right)\left(1-\frac{\xi}{\reta}\right)^{d-1} d\xi \right| \\
& \lesssim \sigmad \int_0^\infty e^{-(\xi+R)/C}\xi^{d-1}d\xi=O(e^{-R/C}),
\end{align*}
so that absorbing this into \eqref{I_0'} and recalling \eqref{I_0} we conclude that
\begin{align}
I_0=c_0\sigmad \reta^{d-1}\left(1+O(e^{-R/C})\right)+O(\reta^{d-3}).\label{s.one}
\end{align}
\underline{Estimate of $I_1$}.
As in the proof of lemma \ref{lemma1}, we estimate
\begin{align*}
\omegaint G'(v_R)dx=(d-1)C\sigmad \reta^{d-2}\left(1+O(e^{-R/C})\right)+O(\reta^{d-4}),
\end{align*}
where $C=\int_{-\infty}^\infty \left(v^3(\xi)-v(\xi)\right)\xi d\xi$ and the $O(\reta^{d-4})$ error term is present only for $d\geq 4$.
We thus obtain, with the help of \eqref{alpha}, the estimate
\begin{align}
I_1&=\alpha \omegaint G'(v_R)dx=O( \phi (1-\eta)\reta^{d-2})+O(\phi \reta^{d-4}).\label{s.two}
\end{align}
\underline{Estimate of $I_2$}.
We note that
\begin{align}
I_2&=\frac{\alpha ^2}{2}\omegaint G''(v_R)dx=\frac{\alpha^2}{2}\omegaint 3v_R^2-1\,dx\notag\\
&=\alpha^2 L^d+\frac{3\alpha ^2}{2}\omegaint v_R^2-1\,dx,\label{I_2}
\end{align}
and we express the integral on the right-hand side as
\begin{align}
\omegaint v_R^2-1\,dx&=\int_0^\infty \sigmad (v_R^2(\xi-\reta)-1)\xi^{d-1}\,d\xi\notag\\
&=\int_{-\infty}^\infty \sigmad (v_R^2(\xi-\reta)-1)\xi^{d-1}\,d\xi \nonumber \\ &-\int_{-\infty}^{0} \sigmad (v_R^2(\xi-\reta)-1)\xi^{d-1}\,d\xi=:I_2'-I_2''.\label{some integral}
\end{align}
Changing variables, we express $I_2'$ and $I_2''$ as
\begin{align*}
I_2'=\int_{-\infty}^\infty \sigmad (v_R^2(\xi)-1)(\reta+\xi)^{d-1}\,d\xi,
\end{align*}
and
\begin{align*}
I_2''=\int_{-\infty}^{-\reta} \sigmad (v_R^2(\xi)-1)(\reta+\xi)^{d-1}d\xi.
\end{align*}
As we have checked in the proof of lemma \ref{lemma1}, $|I_2''|=O(e^{-R/C})$. On the other hand, we observe that
\begin{align}
I_2'&=\sigmad \reta^{d-1}\int_{-\infty}^\infty \left(v_R^2(\xi)-1\right)\left(1+\frac{\xi}{\reta}\right)^{d-1}d\xi\notag\\
&= \sigmad \reta^{d-1}\int_{-\infty}^\infty \left(v_R^2(\xi)-1\right)\sum_{k=0}^{d-1}\binom{d-1}{k}\left(\frac{\xi}{\reta}\right)^kd\xi \nonumber \\
&=-C\sigmad \reta^{d-1}\left(1+O(e^{-R/C})\right)+O(\reta^{d-3})= O(\reta^{d-1}) ,\notag
\end{align}
with $C=\int_\R \left( 1-v^2 (\xi)\right)d\xi>0$ and where the error term $O(\reta^{d-3})$ is included only for $d\geq 3$.
Inserting these two bounds into \eqref{some integral} and then back into
\eqref{I_2} together with the estimate \eqref{alpha} of $\alpha$ yields the estimate
\begin{align}
I_2=L^d\phi^2(1-\eta)^2+O\left(\phi (1-\eta)\reta^{d-2}\right)+O\left(\phi^2(1-\eta)^2 \reta^{d-1}\right).\label{s.three}
\end{align}
\underline{Estimate of $I_3$}.
Using lemma \ref{lemma1} and the estimate \eqref{alpha}, the quantity $I_3=\alpha^3 \omegaint v_Rdx$ is estimated as
\begin{align}
I_3=-L^d \phi^3 (1-\eta)^3 + O\left(\phi^2 (1-\eta)^2\reta^{d-2}\right)+O\left(\phi^4 \eta (1-\eta)^3 L^d\right).\label{s.four}
\end{align}
\underline{Estimate of $I_4$}.
Invoking the estimate \eqref{alpha} on $\alpha$ one more time, along with Remark \ref{1-eta} and the assumption $1-\eta \gg \phi^2$ gives
\begin{align}
I_4=\frac{L^d}{4}\alpha^4=\frac{L^d}{4}\phi^4 (1-\eta)^4+O\left(\phi^3(1-\eta)^3\reta^{d-2}\right).\label{s.five}
\end{align}
Inserting  the estimates \eqref{s.one}, \eqref{s.two}, \eqref{s.three}, \eqref{s.four}, and \eqref{s.five} into the decomposition \eqref{energy decomposition} yields \eqref{s.six}.
\end{proof}
\section{$\Gamma$-convergence of the rescaled energy gap}\label{Gconv}
In this section we study the leading order behavior of the rescaled energy gap $\phi$
as $(\phi,L)  \to (0,\infty)$ in the critical regime. (For the off-critical regime, we recall remark \ref{rem:gamoff}.) The normalization by $\phi^{-d+1}$ is selected by theorem \ref{theorem}. The proof of theorem \ref{theorem} also suggests that the functions of interest in $X_\phi$ satisfy $u\approx +1$ on sets of volume $\sim \phi^{-d}$.
Rescaling  space by a factor of $\phi$, we rewrite the rescaled energy gap from  \eqref{rescaled1} as
\begin{align}\label{rescaled2}
\int_{\Omega_{\phi, L}}\frac{\phi}{2}|\nabla u|^2+\frac{1}{\phi}\Big(G(u)-G(-1+\phi)\Big)\,dx,
\end{align}
where $\Omega_{\phi, L}:=[-\phi L/2,\phi L/2]^d$.

Our proof of the $\Gamma$-convergence is largely a ``sharp interface version'' of the proofs of propositions \ref{prop:lowm}, \ref{prop:lowmcrit}, and \ref{upperbound}. For completeness, we give the details (although in somewhat abbreviated format since the logical arguments have been made above). We will use the sharp partition of $\Omega_{\phi, L}$ into the sets
\begin{align}
A^\kappa_\phi:&=\{x \in \Omega_{\phi, L}: u_\phi (x)<-1+\kappa\},\label{Akappaphi}\\
B^\kappa_\phi:&=\{x \in \Omega_{\phi, L}:-1+\kappa \leq u_\phi (x)\leq 1-\kappa\},\label{Bkappaphi}\\
C^\kappa_\phi:&=\{x \in \Omega_{\phi, L}: u_\phi (x)>1-\kappa\},\label{Ckappaphi}
\end{align}
as well as the smooth partition of unity defined in \eqref{chi1}-\eqref{chi3}, for $\kappa \in (0,1/2)$ that in this part will be taken  to be a constant \emph{that is fixed with respect to $\phi$}.
\begin{proof}[Proof of theorem \ref{Gamma}]
We remark for reference below that
\begin{align}
  \frac{1}{\phi}\int_{\Omega_{\phi, L}}G(-1+\phi)\,dx\overset{\eqref{lim}}\longrightarrow\xi^{d+1}\quad\text{as }\phi\downarrow 0,\label{bul2}\\
  \phi|\omegphil|\overset{\eqref{lim}}\longrightarrow\xi^{d+1}\quad\text{as }\phi\downarrow 0 \label{bul3}.
\end{align}
Throughout the proof, for a given limit function $u_0$, let
\begin{align*}
  C:=\{x\in \R^ d\colon u_0(x)=+1\}.
\end{align*}

\emph{Step 1: Lower semicontinuity}
\medskip

Let $u_0\in -1 + L^p(\mathbb{R}^d)$ for $p\in (1,\infty)$ and suppose $u_\phi \to u_0$ in $-1 + L^p(\mathbb{R}^d)$ as $(\phi,L) \to (0,\infty)$. We need to show that
\begin{align}\label{Gproof-1}
\liminf _{\phi,L}\ephi (u_\phi) \geq \mathcal{E}^\xi_0(u_0).
\end{align}

Suppose that the condition $u_0=\pm1$ a.e. does not hold. Then $G(u_0)> 0$ on a set of positive measure and, in particular, there is a compact set $K\subset\R^ d$ such that
$
  \int_K G(u_0)\,dx>0.
$
Using $L^p(K)$ convergence, we may assume (up to a subsequence) that $u_\phi$ converges to $u_0$ almost everywhere on $K$, and hence by Fatou's lemma we obtain
\begin{align*}
\liminf_{\phi,L}\int_{\Omega_{\phi, L}} G(u_\phi)\,dx\geq \liminf_{\phi,L}\int_{K} G(u_\phi)\,dx>0.
\end{align*}
Combining this with \eqref{bul2} yields
\begin{align}
\liminf_{\phi,L}\ephi (u_\phi) & \geq \liminf_{\phi,L}\frac{1}{\phi}\int_{\Omega_{\phi, L}} G(u_\phi)-G(-1+\phi)\,dx\notag \\
&=\liminf_{\phi,L}\frac{1}{\phi}\int_{\Omega_{\phi, L}}   G(u_\phi)\,dx-\xi^{d+1}=+\infty, \notag
\end{align}
so that \eqref{Gproof-1} holds.

We now consider the case in which $u_0=\pm1$ a.e. We then have $u_0=-1+2\chi_C$, and since $u_0\in -1 + L^p(\mathbb{R}^d)$  it follows $|C|<\infty$. We also remark that we may assume without loss of generality that $u_\phi \leq 1+\kappa$. Indeed, if this is not the case, we can apply lemma \ref{lemma5} and replace $u_\phi$ by $\tilde{u}_\phi \leq 1+\kappa$ such that $\ephi (\tilde{u}_\phi)\leq \ephi (u_\phi)$. It is straightforward to check that the function $\tilde{u}_\phi$ constructed in the lemma also satisfies
\[
\|\tilde{u}_\phi-u_0\|^p_{L^p(\mathbb{R}^d)}\leq \|u_\phi-u_0\|^p_{L^p(\mathbb{R}^d)}+\phi^p |\Omega_{\phi,L}|.
\]
Using \eqref{bul3} and the fact $u_\phi \to u_0$ in $-1 + L^p(\mathbb{R}^d)$, we conclude that $\tilde{u}_\phi \to u_0$ in $-1 + L^p(\mathbb{R}^d)$.

Let us first assume that $C$ is of finite perimeter. The proof resembles our proofs of propositions \ref{prop:lowm}, \ref{prop:lowmcrit}. We use the mean constraint to write
\begin{align}\label{Gproof0}
\ephi (u_\phi)&=\int_{\Omega_{\phi, L}}  \frac{\phi}{2}|\nabla u_\phi|^2+\frac{1}{\phi}\Big(G(u_\phi)-G(-1+\phi)\notag\\
&\qquad\qquad-G'(-1+\phi)(u_\phi-(-1+\phi))\Big)\,dx \notag \\
&=:\int_{\Omega_{\phi, L}}  e_\phi(u_\phi)\,dx=\int_{\Omega_{\phi, L}}  e_\phi(u_\phi)(\chi_1(u_\phi)+\chi_2(u_\phi)+\chi_3(u_\phi))\,dx,
\end{align}
and we consider separately each of the three integrals that appear on the right-hand side of \eqref{Gproof0}. We recall that $\kappa\in (0,1/2)$ is fixed, and we assume without loss of generality that
\begin{align}
\phi\leq \kappa^3. \label{phik}
\end{align}

We turn first to the integral of $e_\phi(u_\phi)\chi_1(u_\phi)$. Using convexity of $G$ on $(-\infty,-1+2\kappa)$ and $\inf_{(-\infty,-1+2\kappa)}G''=G''(-1+2\kappa)$, we estimate as in \eqref{gcan} to obtain
\begin{equation}\label{pos1}
\int_{\Omega_{\phi, L}}  e_\phi(u_\phi)\chi_1(u_\phi)\,dx \geq  \frac{G''(-1+2\kappa)}{2\phi} \int_{\Omega_{\phi, L}}  (u_\phi-(-1+\phi))^2\chi_1(u_\phi)\,dx.
\end{equation}
We treat $|C|=0$ and $|C|>0$ separately. On the one hand, if $|C|=0$, we deduce from \eqref{pos1} that
\begin{align*}
\int_{\Omega_{\phi, L}}  e_\phi(u_\phi)\chi_1(u_\phi)\,dx\geq 0.
\end{align*}
On the other hand, if $|C|>0$, we use H\"older's inequality and the mean constraint as in \eqref{uc} to deduce from \eqref{pos1} that
\begin{align}
&\int_{\Omega_{\phi, L}} e_\phi(u_\phi)\chi_1(u_\phi)\,dx \notag \\
&\geq \frac{G''(-1+2\kappa)}{2\phi|\Omega_{\phi, L}|}\left(\int_{\Omega_{\phi, L}}(u_\phi-(-1+\phi))\chi_1(u_\phi)\,dx\right)^2 \notag \\
& = \frac{G''(-1+2\kappa)}{2\phi|\Omega_{\phi, L}|}\left(\int_{\Omega_{\phi, L}}(u_\phi-(-1+\phi))(\chi_2(u_\phi)+\chi_3(u_\phi))\,dx\right)^2.\label{Gproof1}
\end{align}
From
\begin{align*}
\int_{\Omega_{\phi, L}}u_\phi \chi_2(u_\phi)\, dx\geq -|B^\kappa_\phi| \quad \mbox{and} \quad \int_{\Omega_{\phi, L}}u_\phi \chi_3(u_\phi) \,dx \geq (1-2\kappa)|C^\kappa _\phi|,
\end{align*}
we deduce
\begin{align}\label{Gproof2}
\int_{\Omega_{\phi, L}}(u_\phi-(-1+\phi))(\chi_2(u_\phi)+\chi_3(u_\phi))\,dx \geq -|B^\kappa _\phi|+(2-2\kappa-\phi)|C^\kappa_\phi|.
\end{align}
Noting that
\begin{align*}
\|u_\phi-u_0\|^p_{L^p(\mathbb{R}^d)}&\geq \int_{B^\kappa_\phi}|u_\phi-u_0|^p\,dx \geq \kappa^p |B^\kappa_\phi|,\\
\|u_\phi-u_0\|^p_{L^p(\mathbb{R}^d)}&\geq \kappa^p |C\setminus C^\kappa_\phi|+(1-\kappa)^p|C^\kappa_\phi \setminus C|\geq \kappa^p |C\triangle C^\kappa _\phi|,
\end{align*}
we observe from the convergence of $u_\phi-u_0$ to zero in $L^p$ that
\begin{align}
  |B^\kappa_\phi|\to 0,\qquad |C^\kappa_\phi|\to |C|\label{bc}
\end{align}
as $(\phi,L)\to (0,\infty)$. Consequently, given $|C|>0$, the right-hand side of \eqref{Gproof2} is  nonnegative for $\phi$ small  and from \eqref{bul3}, \eqref{Gproof1}, and \eqref{bc} we obtain
\begin{align}\label{Gproof3}
\liminf_{\phi,L}\int_{\Omega_{\phi, L}}  e_\phi(u_\phi)\chi_1(u_\phi)\,dx \geq \frac{G''(-1+2\kappa)}{2\xi^{d+1}}(2-2\kappa)^2|C|^2.
\end{align}

For the integral of $e_\phi (u_\phi)\chi_3(u_\phi)$, we again use \eqref{mconv1} and
$u_\phi \leq 1+\kappa$ to estimate
\begin{align*}
\int_{\Omega_{\phi, L}}  e_\phi(u_\phi)\chi_3(u_\phi)\,dx  & \geq -\frac{1}{\phi}\,G'(-1+\phi)\int_{\Omega_{\phi, L}}(u_\phi+1)\chi_3(u_\phi)\,dx\\
& \geq -\frac{(2+\kappa)}{\phi}\,G'(-1+\phi)(|B^\kappa_\phi|+|C^\kappa_\phi|),
\end{align*}
so that
\begin{align}\label{Gproof8}
  \liminf_{\phi,L}\int_{\Omega_{\phi, L}}  e_\phi(u_\phi)\chi_3(u_\phi)\,dx   \overset{\eqref{bc}}\geq -2(2+\kappa)|C|.
\end{align}

We now turn our attention to the  the integral of $e_\phi (u_\phi)\chi_2(u_\phi)$. As in \eqref{r1}, we bound
\begin{align*}
\int_{\Omega_{\phi, L}}  e_\phi(u_\phi)\chi_2(u_\phi)\,dx\geq \int_{\Omega_{\phi, L}}\left(\frac{\phi}{2}|\nabla u_\phi|^2+\frac{1}{\phi}\left(G(u_\phi)-2G'(-1+\phi)\right)\right) \chi_2(u_\phi)\,dx,
\end{align*}
and we use \eqref{phik} to see that we can absorb the negative term with $8\kappa G(u_\phi)$. Similarly to in \eqref{per1}, we set $\tilde{G}(u)=(1-8\kappa)G(u)$ and estimate
\begin{align}\label{Gproof4}
\int_{\Omega_{\phi, L}}  e_\phi(u_\phi)\chi_2(u_\phi)\,dx&\geq \int_{\Omega_{\phi, L}}\left(\frac{\phi}{2}|\nabla u_\phi|^2+\frac{1}{\phi}\tilde{G}(u_\phi)\right)\chi_2(u_\phi)\,dx\notag\\
& \geq \int_{\Omega_{\phi, L}}\sqrt{2\tilde{G}(u_\phi)}\,\chi_2(u_\phi)|\nabla u_\phi|\,dx \notag \\
&  = \int_{-1+\kappa}^{1-\kappa}\sqrt{2\tilde{G}(t)}\,\chi_2(t)\text{Per}_{\Omega_{\phi,L}^\circ} (\{u_\phi >t\})\,dt\notag\\
& \geq \int_{-1+2\kappa}^{1-2\kappa}\sqrt{2\tilde{G}(t)}\,\text{Per}_{\Omega_{\phi,L}^\circ} (\{u_\phi >t\})\,dt \notag \\
&  \geq \einf_{\substack{-1+2\kappa\leq t \leq 1-2\kappa}}\text{Per}_{\Omega_{\phi,L}^\circ}(\{u_\phi >t\})\int_{-1+2\kappa}^{1-2\kappa}\sqrt{2\tilde{G}(t)}\,dt.
\end{align}
Here we have used $\text{Per}_{A^\circ}$ to stand for the perimeter in the interior of a set $A$. In the sharp interface limit, we can show that the infimum on the right-hand side of \eqref{Gproof4} converges to the perimeter of $C$ (in contrast to the bound \eqref{stp3} that we derived from \eqref{per1}). Indeed, choosing $t^\kappa_\phi \in [-1+2\kappa,1-2\kappa]$ such that
\begin{align*}
\text{Per}_{\Omega_{\phi,L}^\circ}(\{u_\phi>t^\kappa_\phi\})\leq \einf_{\substack{-1+2\kappa\leq t \leq 1-2\kappa}}\text{Per}_{\Omega_{\phi,L}^\circ} (\{u_\phi >t\})+\phi,
\end{align*}
we reexpress \eqref{Gproof4} as
\begin{align}\label{Gproof5}
\int_{\Omega_{\phi, L}}  e_\phi(u_\phi)\chi_2(u_\phi)\,dx&
\geq \left(\text{Per}_{\Omega_{\phi,L}^\circ}(\{u_\phi >t^\kappa_\phi\})-\phi\right)
\int_{-1+2\kappa}^{1-2\kappa}\sqrt{2\tilde{G}(t)}\,dt\notag\\
&=\left(\text{Per}_{\Omega_{\phi,L}^\circ}(\{u_\phi >t^\kappa_\phi\})-\phi\right)
(1-8\kappa)^{1/2}\int_{-1+2\kappa}^{1-2\kappa}\sqrt{2G(t)}\,dt.
\end{align}
Using
\begin{align*}
\|u_\phi-u_0\|^p_{L^p(\mathbb{R}^d)}\geq \kappa^p|C \triangle \{u_\phi >t^\kappa_\phi\}|,
\end{align*}
we notice that $\|u_\phi-u_0\|_{L^p(\mathbb{R}^d)}\to 0$  implies that $|C \triangle \{u_\phi >t^\kappa_\phi\}|\to 0$, from which it follows that $\|\chi _{\{u_\phi >t^\kappa_\phi\}}-\chi_C\|_{L^1(U)}\to 0$ for every open and bounded set $U \subset \mathbb{R}^d$. By the $L^1$-lower semicontinuity of the perimeter functional, and the fact that $U\subset {\Omega^\circ}_{\phi, L}$ for small $\phi$, we deduce
\begin{align*}
\liminf_{\phi,L}\text{Per}_{\Omega_{\phi,L}^\circ}(\{u_\phi >t^\kappa_\phi\})\geq \liminf_{\phi,L}\text{Per}_U(\{u_\phi >t^\kappa_\phi\})\geq \text{Per}_U(C),
\end{align*}
so that \eqref{Gproof5} becomes
\begin{align*}
\liminf_{\phi,L} \int_{\Omega_{\phi, L}}  e_\phi(u_\phi)\chi_2(u_\phi)\,dx\geq \text{Per}_U(C)(1-8\kappa)^{1/2}\int_{-1+2\kappa}^{1-2\kappa}\sqrt{2G(t)}\,dt,
\end{align*}
and upon taking the supremum over all open and bounded $U\subset \mathbb{R}^d$, and recalling that
\[
\sup_{U\subset \mathbb{R}^d} \{\text{Per}_U(C),\, U: \text{open and bounded}\}=\text{Per}(C),
\]
we see that, in fact,
\begin{align}
  \label{ug}
\liminf_{\phi,L} \int_{\Omega_{\phi, L}}  e_\phi(u_\phi)\chi_2(u_\phi)\,dx\geq \text{Per}(C)(1-8\kappa)^{1/2}\int_{-1+2\kappa}^{1-2\kappa}\sqrt{2G(t)}\,dt.
\end{align}
Substituting \eqref{Gproof3}, \eqref{Gproof8}, and \eqref{ug} into \eqref{Gproof0}, we obtain
\begin{align}\label{Gproof9}
\liminf_{\phi,L}\ephi (u_\phi)
& \geq \frac{G''(-1+2\kappa)}{2\xi^{d+1}}(2-2\kappa)^2|C|^2-2(2+\kappa)|C|\notag\\
&\qquad+\text{Per}(C)(1-8\kappa)^{1/2}\int_{-1+2\kappa}^{1-2\kappa}\sqrt{2G(t)}\,dt.
\end{align}
Letting $\kappa \to 0$, the right-hand side becomes $\mathcal{E}^\xi_0(u_0)$.

If $\text{Per}(C)=\infty$, the same argument implies
\begin{align*}
  \liminf_{\phi,L}\ephi (u_\phi)=\infty.
\end{align*}
\bigskip

\emph{Step 2: Recovery sequence}
\medskip

Here we show that for any $u_0 \in   -1+L^p(\mathbb{R}^d)$ there exists a sequence $\{u_\phi\}_{\phi>0} $ of functions in $-1+L^p(\mathbb{R}^d)$ such that
\begin{align}\label{Gconstruction-1}
\lim_{\phi,L}\|u_\phi -u_0\|_{L^p(\mathbb{R}^d)}=0 \quad \mbox{and} \quad \lim_{\phi,L}\ephi (u_\phi)\leq\mathcal{E}^\xi_0(u_0).
\end{align}
If $\mathcal{E}^\xi_0(u_0)=+\infty$, then \eqref{Gconstruction-1} is trivially satisfied by $u_\phi=u_0$,
so we assume $\mathcal{E}^\xi_0(u_0)<\infty$. Hence $u_0=\pm 1$ a.e.\ and $\text{Per}(C)<\infty$. As above, $u_0\in -1 + L^p(\mathbb{R}^d)$ and $u_0=\pm 1$ a.e.\ implies $|C|<\infty$. We will in the remainder of this proof allow our order symbols $o(\cdot)$, $O(\cdot)$ to depend on $\text{Per}(C)$ and $|C|$.

We first assume that $C$ is open, bounded, and with a $\mathcal{C}^2$ boundary. Letting $h(x)$ denote the signed  distance of the point $x\in \mathbb{R}^d$ to the boundary $\partial C$ (with the convention that $h(x)<0$ for $x \in C$), we set
\begin{align}\label{Gconstruction0}
u_\phi(x):=
\begin{cases}
w_\phi \left(\frac{h(x)}{\phi}\right)+\alpha _\phi, \quad & \mbox{for}\quad x \in \Omega_{\phi, L},\\
-1, \quad & \mbox{for}\quad x \in \mathbb{R}^d \setminus \Omega_{\phi, L},
\end{cases}
\end{align}
where $w_\phi :=v_R$ as in \eqref{v_R} with $R=\phi^{-1/2}$.  As usual, $\alpha _\phi$ is a constant  chosen so that $u_\phi$ satisfies the mean constraint $\dashint_{\Omega_{\phi, L}}  u_\phi \,dx=-1+\phi$.
We begin with an estimate of $\alpha_\phi$, which will be useful below. It follows from the mean constraint that
\begin{eqnarray}\label{Gconstruction1}
&&\alpha _\phi |\Omega _{\phi, L}| \notag \\
&=&(-1+\phi)|\Omega_{\phi, L}|-\int_{\Omega_{\phi, L}}w_\phi \left( \frac{h(x)}{\phi}\right)\,dx \notag \\
& \overset{\eqref{bul3}}=&\int_{\{h(x)<0\}}1-w_\phi\left(\frac{h(x)}{\phi}\right)\,dx-\int_{\{h(x)\geq 0\}}1+w_\phi\left(\frac{h(x)}{\phi}\right)\,dx \notag \\
&&\qquad +\xi^{d+1}-2|C|+o(1).
\end{eqnarray}
Using that $w_\phi(x)$ equals $\pm 1$ for $|x|>2\phi ^{-1/2}$, we obtain
\begin{align}\label{Gconstruction0.5}
&\left|\int_{\{h(x)<0\}}1-w_\phi\left(\frac{h(x)}{\phi}\right)\,dx-\int_{\{h(x)\geq 0\}}
1+w_\phi\left(\frac{h(x)}{\phi}\right)\,dx\right|\notag \\
& \leq\Big|\{|h(x)|<2\phi^{1/2}\}\Big|\lesssim \phi^{1/2}\text{Per}(C),
\end{align}
where in the second estimate we have used the coarea formula and
\begin{align}\label{Gconstruction1.5}
\text{Per}(\{h(x)> t\})\to \text{Per}(C) \quad \mbox{as}\quad t\to 0;
\end{align}
see for example \cite [lemma 2]{S}.

Substituting \eqref{Gconstruction0.5} into \eqref{Gconstruction1} and  recalling \eqref{bul3} yields
\begin{align}\label{Gconstruction2}
\alpha_\phi = \phi \left(1-\frac{2|C|}{\xi^{d+1}}\right)+o(\phi).
\end{align}

We now verify $u_\phi \to u_0$ in $-1 + L^p(\mathbb{R}^d)$. Indeed, we have
\begin{align}\label{Gconstruction2.1}
\|u_\phi-u_0\|^p_{L^p(\mathbb{R}^d)}&=\int_{\Omega_{\phi, L}}|u_\phi-u_0|^p\,dx \notag \\
&=\int_C\left|w_\phi \left(\frac{h(x)}{\phi}\right)+\alpha _\phi -1\right|^p\,dx \notag \\
&\qquad +\int_{\Omega_{\phi, L}\setminus C}\left|w_\phi \left(\frac{h(x)}{\phi}\right)+\alpha _\phi +1\right|^p\,dx \notag \\
&\leq |\alpha_\phi| ^p|\Omega_{\phi, L}|+2|\{|h(x)|<2\phi^{1/2}\}|.
\end{align}
Substituting \eqref{bul3}, \eqref {Gconstruction0.5}, and \eqref{Gconstruction2} into \eqref{Gconstruction2.1}, we deduce  that $\|u_\phi-u_0\|_{L^p(\mathbb{R}^d)}\to 0$ for $p>1$ as $(\phi,L)\to (0,\infty)$.

Next we estimate $\ephi (u_\phi)$. To begin, we write
\begin{align}\label{Gconstruction3}
&\ephi (u_\phi)\notag \\
&=\int_{\Omega_{\phi, L}}\frac{\phi}{2}\left|\nabla w_\phi\left(\frac{h(x)}{\phi}\right)\right|^2+\frac{1}{\phi}G\Big(w_\phi \Big(\frac{h(x)}{\phi}\Big)\Big)\,dx \notag \\
&\quad +\frac{\alpha_\phi}{\phi}   \int_{\Omega_{\phi, L}}     G'\Big(w_\phi \Big(\frac{h(x)}{\phi}\Big)\Big)\,dx\notag\\
&\quad+ \frac{1}{\phi}  \int_{\Omega_{\phi, L}}    \frac{\alpha_\phi^2}{2} G''\Big(w_\phi \Big(\frac{h(x)}{\phi}\Big)\Big) -G(-1+\phi)\,dx \notag \\
&\quad +\frac{\alpha_\phi^3}{3!\phi} \int_{\Omega_{\phi, L}}  G'''\Big(w_\phi \Big(\frac{h(x)}{\phi}\Big)\Big)\,dx+\frac{\alpha_\phi^4}{4!\phi} \int_{\Omega_{\phi, L}}  G^{(4)}\Big(w_\phi \Big(\frac{h(x)}{\phi}\Big)\Big)\,dx \notag \\
&=:I_0+I_1+I_2+I_3+I_4.
\end{align}

We now estimate each of the terms on the right-hand side. Recalling $|\nabla h(x)|=1$ and the fact that $w_\phi=\pm 1$ outside the interval $[-2\phi^{-1/2},2\phi^{1/2}]$, we use the coarea formula to obtain
\begin{align}\label{Gconstruction4}
I_0&=\frac{1}{\phi}\int_{\Omega_{\phi, L}}\left[\frac{1}{2}\Big(w_\phi'\Big(\frac{h(x)}{\phi}\Big)\Big)^2+G\Big(w_\phi \Big(\frac{h(x)}{\phi}\Big)\Big)\right]|\nabla h(x)|\,dx \notag \\
&=   \int_{-2\phi^{-1/2}}^{2\phi^{-1/2}}  \Big(\frac{1}{2}(w_\phi'(s))^2+G(w_\phi(s))\Big)\text{Per}(\{h>\phi s\})\,ds \notag \\
&=c_0\text{Per}(C)+o(1),
\end{align}
where we have applied \eqref{Gconstruction1.5}.
For $I_1$, we observe
\begin{align}\label{Gconstruction5}
|I_1|\leq \frac{|\alpha_\phi|}{\phi}|\{|h(x)|\leq 2\phi^{1/2}\}|=o(1),
\end{align}
where we took into account \eqref{Gconstruction0.5}, \eqref{Gconstruction2}, and $G'(\pm 1)=0$.

For the $I_2$ term, we use \eqref{Gconstruction0.5}, \eqref{Gconstruction2}, \eqref{bul3}, and $G''(\pm1)=2$  to deduce
\begin{align}
  \frac{\alpha_\phi^2}{2\phi} \int_{\Omega_{\phi, L}}G''\left(w_\phi \left(\frac{h(x)}{\phi}\right)\right)\,dx&=\frac{\alpha_\phi^2}{\phi}|\Omega_{\phi, L}|(1+o(1)) \notag \\
       &=\xi^{d+1}-4|C|+\frac{4|C|^2}{\xi^{d+1}}+o(1),\notag
\end{align}
so that, recalling \eqref{bul2}, we obtain
\begin{align}\label{Gconstruction6.6}
I_2=-4|C|+\frac{4|C|^2}{\xi^{d+1}}+o(1).
\end{align}
Regarding the remaining two terms, we easily find $I_3=o(1)$ and $I_4=o(1)$ which, combined with \eqref{Gconstruction3}, \eqref{Gconstruction4}, \eqref{Gconstruction5}, and \eqref{Gconstruction6.6}  yields
\begin{align*}
\lim_{\phi,L}\ephi (u_\phi)=c_0\text{Per}(C)-4|C|+4\xi^{-(d+1)}|C|^2=\mathcal{E}^\xi_0(u_0).
\end{align*}

Finally, we consider the case where $C$ is not open, bounded, and with $\mathcal{C}^2$ boundary. We find it convenient in this part to index our sequence with $j\in\mathbb{N}$, so that $\phi_j \to 0$ and $\phi_jL_j^{d/(d+1)}\to \xi$, and we study $\ephij$.  By an approximation theorem (cf. \cite [ Remark 13.12]{Ma}), for every $j\geq 1$ there exists an open and bounded set $C_j\subset \mathbb{R}^d$ with $\mathcal{C}^2$ boundary such that
\begin{align}\label{Gconstruction7}
|C_j\triangle C|\leq \frac{1}{j}\quad \mbox{and} \quad |\text{Per}(C_j)-\text{Per}(C)|\leq \frac{1}{j}.
\end{align}
Letting $h_j$ denote the signed distance from the boundary of $C_j$, we  choose $0<\phi_j < \frac{1}{j}$ small enough so that
$\{x\in \mathbb{R}^d: h_j(x)\leq 2\phi_j^{1/2}\}\subset \Omega_{\phi_j,L_j}$ and
\begin{align}\label{Gconstruction7.5}
|\text{Per}(\{h_j>\phi_js\})-\text{Per}(C_j)|<\frac{1}{j},\quad \forall s\in (-2\phi_j^{-1/2},2\phi_j^{-1/2}).
\end{align}
As in \eqref{Gconstruction0}, we define
\begin{align}\label{Gconstruction8}
u_{\phi_j}(x):=
\begin{cases}
w_{\phi_j} \left(\frac{h_j(x)}{\phi_j}\right)+\alpha _{\phi_j} \quad & \mbox{for}\quad x \in \Omega_{\phi_j,L_j}\\
-1 \quad & \mbox{for}\quad x \in \mathbb{R}^d \setminus \Omega_{\phi_j,L_j},
\end{cases}
\end{align}
where $w_{\phi_j} :=v_R$ as in \eqref{v_R} with $R=\phi_j^{-1/2}$, and the constant $\alpha _{\phi_j}$ is such that $\dashint_{\Omega_{\phi_j,L_j}}  u_{\phi_j} \,dx=-1+\phi_j$. Using this constraint, the first part of \eqref{Gconstruction7}, and \eqref{bul3}, one observes
\begin{align}
\alpha_j|\Omega_{\phi_j,L_j}|&=\phi_j |\Omega_{\phi_j,L_j}|-2|C_j|+\int_{\{h_j(x)<0\}}1-w_{\phi_j}\Big(\frac{h_j(x)}{\phi_j}\Big)\,dx \notag \\
& \qquad -\int_{\{h_j(x)>0\}}1+w_{\phi_j}\Big(\frac{h_j(x)}{\phi_j}\Big)\,dx+o(1) \notag \\
&=\xi^{d+1}-2|C|+o(1),
\end{align}
from which it follows, with another application of \eqref{bul3}, that
\begin{align}\label{Gconstruction9}
\alpha_j = \phi _j \left(1-\frac{2|C|}{\xi^{d+1}}\right)+o(\phi_j).
\end{align}
We therefore have
\begin{align}\label{Gconstruction9.5}
\|u_{\phi_j}-u_0\|_{L^p(\mathbb{R}^d)}&=\|u_{\phi_j}-(-1+2\chi_C)\|_{L^p(\mathbb{R}^d)}\notag \\
&\leq \|u_{\phi_j}+1-2\chi_{C_j}\|_{L^p(\mathbb{R}^d)} +2\|\chi_C-\chi_{C_j}\|_{L^p(\mathbb{R}^d)}\notag \\
&=\Big\|w_{\phi_j}\Big(\frac{h_j(x)}{\phi_j}\Big)+\alpha_j+1-2\chi_{C_j}\Big\|_{L^p(\Omega_{\phi_j,L_j})}+ 2|C_j\triangle C|^{1/p} \notag \\
&\leq |\alpha_j||\Omega_{\phi_j,L_j}|^{1/p}+|\{|h_j(x)|\leq2\phi_j^{1/2}\}|^{1/p}+2|C_j\triangle C|^{1/p} \notag \\
& \leq |\alpha_j||\Omega_{\phi_j,L_j}|^{1/p} +\phi_j^{1/2p} \left( 4\,\text{Per}(C) + o(1)\right)^{1/p} +o(1),
\end{align}
where we have argued as in \eqref{Gconstruction0.5} and applied the second part of \eqref{Gconstruction7} and \eqref{Gconstruction7.5}.
Substituting \eqref{Gconstruction9} and \eqref{bul3} into
 \eqref{Gconstruction9.5} yields
\begin{align}
\lim _{j\uparrow \infty}\|u_{\phi_j} -u_0\|_{L^p(\mathbb{R}^d)}=0.
\end{align}

It remains to estimate the energy.
Decomposing $\ephij(u_{\phi_j})$ as in \eqref{Gconstruction3} and estimating as in the previous case, we obtain
\begin{align*}
I_0&=c_0\text{Per}(C_j)+o(1)\overset{\eqref{Gconstruction7}}=c_0\text{Per}(C)+o(1),\\
I_2&=-4|C_j|+\frac{4|C_j|^2}{\xi^{d+1}}+o(1)\overset{\eqref{Gconstruction7}}=-4|C|+\frac{4|C|^2}{\xi^{d+1}}+o(1),\\
I_1&=o(1),\quad I_3=o(1),\quad I_4=o(1),
\end{align*}
as $j\uparrow \infty$, from which \eqref{Gconstruction-1} follows.
\end{proof}
\appendix
\section {The lowest energy saddle point $u_s$}
The existence of the energy barrier $\delE$ that separates the uniform state $\bar{u}$ from states of lower energy suffices to establish the existence of a saddle point $u_s$ of the energy functional $E(u)$, such that $E(u_s)=E(\bar{u})+\delE$. Here we define a saddle point of a $\mathcal{C}^1$ functional $E$ on a reflexive Banach space $X$ to be a point $x\in X$, such that $E'(x)=0$, and such that any neighborhood of $x$ contains two points $y$ and $z$ for which $E(y)<E(x)<E(z)$. In other words, a saddle point is a critical point that is neither a local maximum nor a local minimum of $E$.

A minimal energy saddle point $u_s$ on the boundary of the domain of attraction of the uniform state $\bar{u}$ is sometimes referred to as a critical nucleus.
We will use the ``minimax" characterization of $\delE$ (cf. \eqref{ebd}) and the mountain pass theorem to establish the existence of such a saddle point on the torus in the off-critical and critical regimes.

All of the arguments in the appendix are standard and we include them only for completeness.
We begin with the following definition.
\begin{definition}[Palais-Smale compactness criterion]
A sequence $x_k\in X$ is called a Palais-Smale sequence if $\sup_{k\geq 1}|E(x_k)|<\infty$ and $\|E'_{x_k}\|_{X^*}\to 0$.
A functional $E\in \mathcal{C}^1(X)$ is said to satisfy the Palais-Smale condition (PS) if every Palais-Smale sequence
has a strongly convergent subsequence in $X$.
\end{definition}
It is convenient to shift the argument of $E$ by the mean and consider the functional $\hat{\mathcal{E}}$  on the vector space $X$ of $w\in H^1\cap L^4(\Omega)$ with $\int_\Omega w\,dx=0$  by $\hat{\mathcal{E}}(w):=E(w+\bar{u})-E(\bar{u})$. Given that
\begin{align}\label{CH energy}
E(u)=\omegaint \frac{1}{2}|\nabla u|^2+\frac{1}{4}(1-u^2)^2\,dx,
\end{align}
we have
\begin{align}\label{I}
\hat{\mathcal{E}}(w)=\omegaint \frac{1}{2}|\nabla w|^2+\frac{w^4}{4}+\bar{u}w^3+\frac{1}{2}(3\bar{u}^2-1)w^2\,dx.
\end{align}
We define the norm on $X$ as $\|w\|:=\|\nabla w\|_2+\|w\|_4$, where $\|\cdot\|_p$ stands for the usual $L^p$-norm $\|\cdot\|_{L^p(\Omega)}$. We begin by checking that $\hat{\mathcal{E}}$ is smooth and satisfies PS.
\blemma\label{PS}
The functional $\hat{\mathcal{E}}$ is of class $\mathcal{C}^1(X)$ and satisfies the Palais-Smale condition.
\elemma
\begin{proof}
It is easy to see that $\hat{\mathcal{E}}$ is continuously Fr\'{e}chet differentiable in $X$, with its Fr\'{e}chet derivative at a point $w\in X$ defined via
\begin{align}\label{I'}
\hat{\mathcal{E}}'_w(\psi) = \omegaint \nabla w \cdot \nabla \psi + \left(w^3+3\bar{u}w^2+(3\bar{u}^2-1)w\right)\psi \,dx,
\end{align}
for all $\psi \in X$.
In order to verify the PS property, consider a PS sequence $\{w_k\}_{k\geq 1}\subset X$. Then $\hat{\mathcal{E}}(w_k)$ is uniformly bounded and, by  the coercivity of $\hat{\mathcal{E}}$,  we obtain  $\sup_{k\geq 1}\|w_k\|<\infty$. By passing to a subsequence, if necessary, we may assume that $w_k\rightharpoonup w$ in  $H^1(\Omega)$ and $L^4(\Omega)$.
Moreover, the compact imbedding of $H^1(\Omega)$ in $L^2(\Omega)$ implies $w_k\to w$ in $L^2(\Omega)$, so by interpolation and the boundedness of $\{w_k\}$ in $L^4(\Omega)$ we also obtain $w_k\to w$ in $L^3(\Omega)$.

On the other hand, since $\{w_k\}$ is a PS sequence, we also have
\begin{align}\label{I' to 0}
\hat{\mathcal{E}}'_{w_k}(w)\to 0 \quad \mbox{and} \quad \hat{\mathcal{E}}'_{w_k}(w_k)\to 0,
\end{align}
which, in light of \eqref{I'}, can be written as
\begin{align}\label{I'2}
\omegaint \nabla w_k\cdot \nabla w+\left(w_k^3+3\bar{u}w_k^2+(3\bar{u}^2-1)w_k\right)w\,dx\rightarrow 0,
\end{align}
and
\begin{align}\label{I'3}
\omegaint |\nabla w_k|^2+w_k^4+3\bar{u}w_k^3+(3\bar{u}^2-1)w_k^2\,dx\rightarrow 0,
\end{align}
respectively. From weak convergence in $H^1$ and strong convergence in $L^3$ together with \eqref{I'2} and \eqref{I'3}, we deduce
\begin{align*}
\lim_{k\uparrow \infty}\omegaint |\nabla w_k|^2+w_k^4\,dx&=\lim_{k\uparrow \infty}\omegaint \nabla w_k\cdot \nabla w+w_k^3w\,dx\\
&=\omegaint |\nabla w|^2\,dx+ \lim_{k\uparrow \infty} \omegaint w_k^3w\,dx,
\end{align*}
so that in order to deduce strong convergence of $w_k$ in $X$, it suffices to show
\begin{align}
  \omegaint w_k^3w\,dx \to \omegaint w^4 \,dx.\label{toshow}
\end{align}
By density of $L^\infty (\Omega)$  in $L^4(\Omega)$, for any $\eps>0$ there exists $g \in L^\infty (\Omega)$ such that $\|w-g\|_4 <\eps$. We thus have
\begin{align}
&\Big|\omegaint (w^3-w_k^3)w\,dx\Big|\leq \Big|\omegaint (w^3-w_k^3)(w-g)\,dx\Big|+\Big|\omegaint (w^3-w_k^3)g\,dx\Big| \nonumber \\& \lesssim \|w-g\|_4 \Big(\omegaint w^4+w_k^4\,dx\Big)^{3/4}+\omegaint |w-w_k|(w^2+w_k^2)|g|\,dx \nonumber \\&\lesssim\big(\eps+\|g\|_\infty \|w_k-w\|_2\big)(1+||w_k||_{L^4}^4).
\end{align}
By the uniform bound in $L^4$ and strong convergence in $L^2$, we obtain
\begin{align*}
\limsup _{k\uparrow \infty}\Big|\omegaint (w^3-w_k^3)w\,dx\Big|\leq C\eps\qquad\text{for some }C<\infty.
\end{align*}
Sending $\eps\to 0$, we obtain \eqref{toshow} and strong convergence of $w_k$ in $X$.
\end{proof}
We now employ a mountain pass argument to prove the existence of a saddle point $u_s$ of $E$ with $E(u_s)=E(\bar{u})+\delE$. This is the content of corollary \ref{cor:sad}.
\begin{proof}[Proof of corollary \ref{cor:sad}]
We will first show the existence of a nonconstant critical point with energy $E(\bar{u})+\delE$. Note that this is equivalent to showing the existence of a nonconstant critical point $w_s$ of $\hat{\mathcal{E}}$  such that $\hat{\mathcal{E}}(w_s)=\delE$.
Clearly $\hat{\mathcal{E}}(0)=0$, and as was shown in sections 2 and 3  we have $\mathcal{A}\neq \emptyset$ and $\delE>0$, where
\begin{align*}
\mathcal{A}:=\Big\{\gamma \in C([0,1];X):\gamma(0)=0, \hat{\mathcal{E}}(\gamma(1))<0\Big\},
\end{align*}
and
\begin{align}\label{def barrier 2}
\delE:=\inf_{\gamma \in \mathcal{A}}\max_{t\in [0,1]} \hat{\mathcal{E}}(\gamma(t)).
\end{align}

We denote the set of critical points of $\hat{\mathcal{E}}$ with critical value $\delE$ by $K_{\delE}$, i.e.,
\begin{align*}
K_{\delE}:=\Big\{w\in X: \hat{\mathcal{E}}'(w)=0 \quad \mbox{and} \quad \hat{\mathcal{E}}(w)=\delE\Big\}.
\end{align*}
Suppose  that $K_{\delE} = \emptyset$. Since by lemma \ref{PS} the functional $\hat{\mathcal{E}}$ satisfies PS, we may apply the deformation lemma (see, for example, theorem A.4. in \cite{R}), which implies that for any $\bar{\eps}\in (0, \delE)$ there exists some $\eps \in (0,\bar{\eps})$ and a homeomorphism $h:X\to X$ such that, with
\begin{align*}
A_s:=\Big\{w\in X: \hat{\mathcal{E}}(w)\leq s\Big\},
\end{align*}
we have
\begin{align}\label{prop of h1}
h(A_{\delE+\eps})\subset A_{\delE-\eps},
\end{align}
as well as
\begin{align}\label{prop of h2}
h(w)=w,\,\,\,\,\forall w\in X\quad \mbox{with}\quad \hat{\mathcal{E}}(w)\notin [\delE-\bar{\eps},\delE+\bar{\eps}].
\end{align}
By the fact that $\mathcal{A}\neq \emptyset$ and the definition of $\delE$, there exists  a path $\gamma_1 \in \mathcal{A}$ such that
\begin{align*}
\max_{t \in [0,1]}\hat{\mathcal{E}}(\gamma_1 (t))<\delE +\eps.
\end{align*}
Since $\max\{\hat{\mathcal{E}}(0),\hat{\mathcal{E}}(\gamma_1(1))\}=0<\delE -\bar{\eps}$, it  follows by \eqref{prop of h1} and \eqref{prop of h2} that $h \circ \gamma_1 \in \mathcal{A}$ and
\begin{align*}
\max_{t\in [0,1]}\hat{\mathcal{E}}(h(\gamma_1(t)))\leq \delE -\eps.
\end{align*}
This, however, contradicts \eqref{def barrier 2}, and therefore $K_{\delE}\neq \emptyset$.

It is a direct consequence of PS that the nonempty set $K_{\delE}$ is compact, and since $X$ is infinite dimensional it can be shown (cf., for example, \cite{PS}) that, since $K_{\delE}$ cannot separate two points in its complement $X\setminus K_{\delE}$, it must contain a saddle point.
\end{proof}
\bigskip
\section*{Acknowledgements}
We would like to thank Felix Otto for suggesting the problem and for insight into the scaling bound, which was important for the sharp lower bounds in this paper. In addition, we would like to thank Stan Alama, Lia Bronsard, Eric Carlen, Vladimir Delengov, Bob Kohn, Mark Peletier, Peter Sternberg, Michael Struwe, and Alfred Wagner for interesting discussions on this and related topics.
\medskip
\bigskip\bigskip

\end{document}